\documentclass[10pt, letterpaper]{article}
\usepackage{graphicx} 
\usepackage{subcaption}
\usepackage{placeins}
\usepackage{amsmath}
\usepackage{color}
\usepackage{amsthm}
\usepackage{amssymb}
\DeclareMathOperator*{\argmin}{arg\,min}
\usepackage[letterpaper, total={7in, 9in}]{geometry}
\usepackage{amsfonts}
\usepackage{stmaryrd}
\usepackage{authblk}
\usepackage{mathrsfs}

\theoremstyle{plain}
\newtheorem{theorem}{Theorem}[section]
\newtheorem{lemma}[theorem]{Lemma}
\newtheorem{proposition}[theorem]{Proposition}

\newtheorem{remark}[theorem]{Remark}

\title{Stable time-stepping via residual minimization: finite-element and neural-network approximations for transient parabolic problems \thanks{J.M.M. was supported by the Consolidated Research Group MATHMODE (IT1866-26) at UPV/EHU, funded by the Department of Education of the Basque Government, by Research Project PID2023-146668OA-I00, and by Grant RYC2023-045172-I funded by MICIU/AEI/10.13039/501100011033. S.R. was partially supported by ANID through FONDECYT Project 1240643 and by the National Center for Artificial Intelligence (CENIA FB210017), Basal ANID\@. Parts of this work were carried out during a research visit by the second author to the Basque Center for Applied Mathematics (BCAM, Spain). S.R. also thanks Prof. Carsten Carstensen for the initial discussion that led to this manuscript. } }

\author[1,2]{Judit Mu\~noz-Matute}
\author[3]{Sergio Rojas}

\affil[1]{\normalsize Department of Mathematics, The University of the Basque Country (EHU), 48940, Leioa, Spain
\protect\\ \texttt{judit.munoz@ehu.eus}}

\affil[2]{\normalsize IKERBASQUE, Basque Foundation for Science, 48009, Bilbao, Spain}

\affil[3]{\normalsize School of Mathematics, Monash University, 9 Rainforest Walk, 3800 Melbourne VIC, Australia
\protect\\ \texttt{sergio.rojas@monash.edu}}

\date{\today}

\begin{document}
\maketitle

\begin{abstract}
We propose a time-stepping minimum-residual (MinRes) framework for transient coercive variational problems, applicable to finite-element and neural-network trial approximations. For the Backward Euler scheme, we measure the residual at each time level in the dual norm induced by the steady test-space norm, scaled by the time step. This choice yields stability estimates controlling a discrete
parabolic energy quantity, up to the time-discretization defect. We develop conforming and broken-test formulations and specialize the analysis to diffusion--advection--reaction problems. For finite-dimensional test spaces, we derive fully discrete reliability estimates by supplementing the computable discrete residual with a complementary contribution that accounts for residual components the test space does not resolve. We then extend the framework to neural-network trial classes and obtain computable residual decompositions for conforming and broken polynomial test spaces. Numerical experiments confirm the expected finite-element convergence rates, examine the effect of test-space refinement for a global space--time neural approximation, and demonstrate residual-driven spatial refinement for a transient problem with a moving localized feature.
\end{abstract}

\section{Introduction}

Minimum-residual (MinRes) discretizations provide a systematic approach to stably approximating variational problems. For a well-posed linear problem satisfying the Banach--Ne\v{c}as--Babu\v{s}ka conditions \cite{babuvska1971error}, the approximation error is equivalent, up to stability constants, to the dual norm of the residual:
$$
\|u-u_\star\|_U
\eqsim
\|\ell(\cdot)-a(u_\star,\cdot)\|_{V'}
=
\|\varepsilon\|_V,
$$
where $\varepsilon\in V$ is the Riesz representative of the residual. A MinRes method therefore seeks an approximation $u_\star$ by minimizing a computable realization of this residual norm. This viewpoint underlies discontinuous Petrov--Galerkin (DPG) methods \cite{demkowicz2011analysis,demkowicz2012class}, where the choice of the test norm is part of the formulation and determines the residual norm being minimized; see the recent review \cite{demkowiczGopalakrishnan2025}. It also underlies residual-minimization approaches based on neural-network trial classes, such as robust variational physics-informed neural networks (RVPINNs)~\cite{rojas2024rvpinn}.

Extending this viewpoint to transient problems adds another difficulty. To make this explicit, consider the general variational parabolic problem:
$$
(\partial_t u(t),v)_0+a(u(t),v)
=
(f(t),v)_0,
\qquad
\forall v\in V,
$$
where $V$ is a Hilbert space continuously embedded in $L^2(\Omega)$ and $a(\cdot,\cdot)$ is a continuous and coercive bilinear form. For a uniform partition of the time interval $0=t_0<t_1<\cdots<t_{N+1}=T$, with time step size $\tau=t_{n+1}-t_n$, the Backward Euler discretization gives
$$
(u^{n+1},v)_0+\tau a(u^{n+1},v)
=
(u^n,v)_0+\tau(f^{n+1},v)_0,
\qquad
\forall v\in V,
$$
where $(\cdot,\cdot)_0$ denotes the usual inner product in $L^2(\Omega)$. For the heat equation, where $a(v,w)=(v,w)_V$, the time-step operator naturally suggests the inner product $(v,w)_0+\tau(v,w)_V$. From a minimum-residual perspective, however, adopting this inner product as the test-space inner product makes the norm itself depend on $\tau$ and couples the $L^2(\Omega)$ and elliptic scales. Consequently, the residual representative and the associated approximation properties also inherit this time-step dependence, and obtaining estimates in the usual discrete-in-time parabolic energy quantities becomes less direct than in the classical Galerkin analysis.

Time-stepping minimum-residual and DPG formulations have been studied previously in this setting \cite{roberts2021time}. F\"uhrer, Heuer, and Sen Gupta~\cite{fuhrerHeuerSenGupta2017} analyzed a Backward Euler DPG scheme based on an ultraweak formulation of the heat equation, using time-step-dependent weighted norms, and obtained stability and quasi-optimal error estimates. Related time-stepping least-squares formulations for parabolic first-order systems were studied by F\"uhrer and Karkulik~\cite{fuhrerKarkulik2019}, where an elliptic projection is introduced to recover optimal estimates. For primal DPG methods, F\"uhrer, Heuer, and Karkulik~\cite{fuhrerHeuerKarkulik2021} showed that optimal Backward Euler estimates can be obtained by using projections associated with the spatial part of the operator rather than with the full time-step problem. In the particular case of the heat equation, they also showed that the primal DPG field variable coincides with the standard Galerkin approximation. These works demonstrate that the interaction between the time-step-dependent problem and the spatial minimum-residual structure requires suitably weighted norms or projection arguments. Space--time least-squares \cite{fuhrerKarkulik2021spacetime,gantner2021further} and space--time DPG formulations \cite{chakraborty2025space,diening2022space} provide an alternative in which space and time are discretized simultaneously.

Rather than using the inner product induced by the complete Backward Euler time-step operator, we retain the steady spatial test-space inner product and scale it only by the time step, namely $\tau(v,w)_V$. Thus, the spatial norm used to represent the residual does not change with $\tau$; the time-step size appears only as an overall scaling. This simple choice yields an error equation that can be analyzed with the same type of energy argument used for the classical Backward Euler Galerkin method \cite{di2011mathematical}: testing with the error, using the polarization identity for the $L^2(\Omega)$ term, and exploiting the coercivity of the spatial bilinear form. In this sense, the resulting MinRes analysis retains the structure of the standard Galerkin stability argument while allowing independent choices of trial and test spaces.

More precisely, for an arbitrary trial sequence $\{u_\star^n\}_{n=0}^{N+1}$, let $e_\star^n:=u^n-u_\star^n$. The associated residual representative satisfies
$$
(\varepsilon_\star^{n+1},v)_V
=
\left(
\frac{e_\star^{n+1}-e_\star^n}{\tau},v
\right)_0
+
a(e_\star^{n+1},v)
-
(\vartheta^{n+1},v)_0,
\qquad
\forall v\in V,
$$
where $\vartheta^{n+1}$ denotes the Backward Euler consistency defect. The central stability estimate has the form
$$
\begin{aligned}
\|e_\star^{N+1}\|_0
&+
\left(
\sum_{n=0}^{N}
\|e_\star^{n+1}-e_\star^n\|_0^2
+
\tau
\sum_{n=0}^{N}
\|e_\star^{n+1}\|_V^2
\right)^{1/2}
\lesssim
\left(
\tau
\sum_{n=0}^{N}
\|\varepsilon_\star^{n+1}\|_V^2
\right)^{1/2}
+
\|e_\star^0\|_0
+
\mathcal O(\tau).
\end{aligned}
$$
The argument does not require the trial sequence to be generated by a finite-dimensional linear space or by a particular minimum-residual algorithm. This representation-independent step separates the time-stepping stability analysis from the subsequent discretization of the residual norm.

For neural-network trial classes, this separation has an additional computational consequence. We consider a single network $u_\theta:\Omega\times[0,T]\to\mathbb R$, which, for sufficiently smooth activation functions, provides a smooth approximation in both space and time. However, the residual is evaluated only at the discrete time levels. A direct space--time variational formulation would generally require numerical integration over $\Omega\times(0,T)$, whereas the present time-stepping construction replaces this by a sequence of spatial quadratures over $\Omega$. Thus, the neural trial function remains a global space--time approximation while the numerical integration used to construct the loss remains $d$-dimensional rather than $(d+1)$-dimensional. This distinction is relevant for variational neural methods, for which quadrature accuracy and cost are an important part of the residual evaluation \cite{kharazmi2021hpvpinn, berrone2022quadratures}. The resulting formulation therefore combines a smooth space--time neural representation with time-discrete residual evaluation, while the estimates below provide error control at the discrete time levels.

A closely related time-discrete VPINN strategy has recently been used for nonlinear heat-conduction problems, combining classical time discretization with minimization of a residual dual norm at each time step~\cite{bastidas2026timediscrete}. Here we instead derive representation-independent stability and efficiency estimates and quantify the effect of replacing the continuous residual norm by a finite-dimensional one. The same time-discrete construction could also be considered for strong-residual neural formulations such as PINNs \cite{raissi2019physics,shin2023error}. 

When the test space is discretized, an additional issue is whether the finite-dimensional test space resolves the continuous residual representative sufficiently well. This distinction is particularly important for neural-network trial classes. In RVPINNs~\cite{rojas2024rvpinn}, the variational residual is measured through a discrete dual norm computed from a finite-dimensional test space. The a posteriori framework in~\cite{fuhrer2025posteriori} makes explicit that a small discrete residual alone need not control the error if the test space does not detect the full residual; reliable control is recovered by supplementing the detected residual with a complementary residual contribution. A related approach adapts the test space itself. In \cite{udomworarat2026neural}, adaptive test-space enrichment is used to control the discrepancy between the discrete and continuous Riesz representatives, providing a mechanism for controlling the spatial discretization error in the discrete dual norm. The complementary residual contributions in this work play a related role: they quantify the component of the continuous residual that the chosen finite-dimensional test space misses.

We establish stability and efficiency estimates in abstract conforming and broken-test settings, and derive fully discrete residual comparisons after discretization of the test space. A diffusion--advection--reaction operator provides a concrete realization with computable local residual contributions. We also extend the framework to global space--time neural-network trial classes.

The remainder of the paper is organized as follows. Sections~\ref{sec:prelim}--\ref{sec:nn} develop the analytical framework, Section~\ref{sec:numerics} presents the numerical experiments, and the final section contains concluding remarks.

\section{Preliminaries}\label{sec:prelim}

\subsection{Abstract problem and notation}\label{subsec:model}

Let $\Omega\subset\mathbb{R}^d$, $d\in\{1,2,3\}$, be a bounded Lipschitz domain, and let $T>0$ be the final time. We denote by $(\cdot,\cdot)_0$ and $\|\cdot\|_0$ the inner product and norm in $L^2(\Omega)$, respectively. For brevity, we write $C^\ell(X):=C^\ell([0,T];X)$, where $X$ is a Hilbert space. We set $V:=H_0^1(\Omega)$, with $(v,w)_V:=(\nabla v,\nabla w)_0$ and $\|v\|_V:=\|\nabla v\|_0$. By the Poincar\'e inequality, there exists $C_P>0$ such that
\begin{equation}\label{eq:poincare}
\|v\|_0\le C_P\|v\|_V,
\qquad
\forall v\in V.
\end{equation}
We consider the abstract parabolic problem: find $u\in\mathcal X:=C^0(V)\cap C^1(L^2(\Omega))$ such that $u(0)=u_0$ and
\begin{equation}\label{eq:abstract-parabolic}
(\partial_tu(t),v)_0+a(u(t),v)
=
(f(t),v)_0,
\qquad
\forall v\in V,
\quad
t\in[0,T],
\end{equation}
where $f\in C^0(L^2(\Omega))$, $u_0\in V$, and $a:V\times V\to\mathbb R$ is a time-independent bilinear form satisfying
\begin{align}
|a(w,v)|
&\le M\|w\|_V\|v\|_V,
&&\forall w,v\in V,
\label{eq:a-continuity}\\
a(v,v)
&\ge \alpha\|v\|_V^2,
&&\forall v\in V,
\label{eq:a-coercive}
\end{align}
for constants $M,\alpha>0$. 

A principal realization of \eqref{eq:abstract-parabolic}, used below for computable residual decompositions and numerical experiments, is the diffusion--advection--reaction problem
\begin{equation}\label{eq:dar-strong}
\begin{aligned}
\partial_tu
-\epsilon\Delta u
+\boldsymbol\beta\cdot\nabla u
+\gamma u
&=f
&&\text{in }\Omega\times(0,T),\\
u&=0
&&\text{on }\partial\Omega\times(0,T),\\
u(0)&=u_0
&&\text{in }\Omega,
\end{aligned}
\end{equation}
where $\epsilon>0$, $\boldsymbol\beta\in W^{1,\infty}(\Omega)^d$, and $\gamma\in L^\infty(\Omega)$ are time-independent. The associated conforming bilinear form is
\begin{equation}\label{eq:dar-bilinear}
a_{\rm dar}(w,v)
:=
\epsilon(\nabla w,\nabla v)_0
+
(\boldsymbol\beta\cdot\nabla w,v)_0
+
(\gamma w,v)_0.
\end{equation}
If
\begin{equation}\label{eq:dar-coercivity-assumption}
\gamma-\frac12\nabla\cdot\boldsymbol\beta\ge0
\qquad\text{a.e.\ in }\Omega,
\end{equation}
then integration by parts gives
\begin{equation}\label{eq:dar-coercivity}
a_{\rm dar}(v,v)
=
\epsilon\|\nabla v\|_0^2
+
\left(\gamma-\frac12\nabla\cdot\boldsymbol\beta,v^2\right)_0
\ge
\epsilon\|v\|_V^2.
\end{equation}
Thus $a_{\rm dar}$ satisfies \eqref{eq:a-continuity}--\eqref{eq:a-coercive} with constants depending on the fixed coefficients. The heat equation is recovered by taking $\epsilon=1$, $\boldsymbol\beta=\boldsymbol0$, and $\gamma=0$.

\subsection{Conforming and broken variational settings}
\label{subsec:variational-formulations}

\paragraph{Conforming setting.}
The conforming setting is precisely \eqref{eq:abstract-parabolic}; the analysis below uses only the continuity and coercivity properties \eqref{eq:a-continuity}--\eqref{eq:a-coercive}.

\paragraph{Broken-test setting.}
Let $\mathcal T_h$ be a conforming, shape-regular partition of $\Omega$ into Lipschitz elements. We write $\Gamma_h:=\partial\mathcal T_h:=\bigcup_{K\in\mathcal T_h}\partial K$ for the mesh skeleton and define
\begin{equation}\label{eq:broken-space}
H^1(\mathcal T_h)
:=
\left\{
v\in L^2(\Omega):
v|_K\in H^1(K)
\quad\forall K\in\mathcal T_h
\right\}.
\end{equation}
For $v\in H^1(\mathcal T_h)$, let $\nabla_hv$ denote the elementwise gradient. We set $V_{\rm br}:=H^1(\mathcal T_h)$ and equip it with the inner product $(v,z)_{V_{\rm br}}:=(v,z)_0+(\nabla_hv,\nabla_hz)_0$, with corresponding norm $\|v\|_{V_{\rm br}}^2=\|v\|_0^2+\|\nabla_hv\|_0^2$. Since $V\subset V_{\rm br}$, the Poincar\'e inequality gives
\begin{equation}\label{eq:br-emb}
\|v\|_{V_{\rm br}}
\le
C_{\rm emb}\|v\|_V,
\qquad
C_{\rm emb}:=\sqrt{1+C_P^2},
\qquad
\forall v\in V.
\end{equation}
Moreover,
\begin{equation}\label{eq:br-l2-control}
\|v\|_0\le\|v\|_{V_{\rm br}},
\qquad
\forall v\in V_{\rm br}.
\end{equation}
Thus, $C_{0,\rm br}=1$ in the $L^2$-control used below. Let
$$
H(\operatorname{div};\Omega)
:=
\left\{
\boldsymbol q\in[L^2(\Omega)]^d:
\operatorname{div}\boldsymbol q\in L^2(\Omega)
\right\},
$$
 and define the broken normal-trace operator $\operatorname{tr}^{\operatorname{div}}_{\mathcal T_h}: H(\operatorname{div};\Omega)\to H^1(\mathcal T_h)'$ by
\begin{equation}\label{eq:broken-trace-operator}
\left\langle
\operatorname{tr}^{\operatorname{div}}_{\mathcal T_h}\boldsymbol q,v
\right\rangle_{\Gamma_h}
:=
\sum_{K\in\mathcal T_h}
\left[
(\operatorname{div}\boldsymbol q,v)_K
+
(\boldsymbol q,\nabla v)_K
\right],
\qquad
v\in H^1(\mathcal T_h).
\end{equation}
We define
\begin{equation}\label{eq:broken-flux-space}
\widehat\Lambda
:=
H^{-1/2}(\Gamma_h)
:=
\operatorname{ran}
\left(\operatorname{tr}^{\operatorname{div}}_{\mathcal T_h}\right)
\end{equation}
and equip it with the quotient norm
\begin{equation}\label{eq:broken-flux-norm}
\|\widehat\mu\|_{\widehat\Lambda}
:=
\inf_{\substack{
\boldsymbol q\in H(\operatorname{div};\Omega)\\
\operatorname{tr}^{\operatorname{div}}_{\mathcal T_h}\boldsymbol q=\widehat\mu
}}
\|\boldsymbol q\|_{H(\operatorname{div};\Omega)}.
\end{equation}
Let
\begin{equation}\label{eq:broken-trial-space}
U:=V\times\widehat\Lambda
\end{equation}
and define
\begin{equation}\label{eq:broken-trial-norm}
\|\boldsymbol w\|_U^2
:=
\|w\|_V^2+\|\widehat\mu\|_{\widehat\Lambda}^2,
\qquad
\boldsymbol w=(w,\widehat\mu)\in U.
\end{equation}
The corresponding space--time space is
\begin{equation}\label{eq:space-time-broken}
\mathcal X_{\rm br}
:=
\mathcal X\times C^0(\widehat\Lambda).
\end{equation}
We consider a bilinear form $a_{\rm br}:U\times V_{\rm br}\to\mathbb R$ satisfying the boundedness condition
\begin{equation}\label{eq:broken-continuity}
|a_{\rm br}(\boldsymbol w,v)|
\le
M_{\rm br}\|\boldsymbol w\|_U\|v\|_{V_{\rm br}},
\qquad
\forall \boldsymbol w\in U,
\quad
\forall v\in V_{\rm br},
\end{equation}
for a mesh-independent constant $M_{\rm br}>0$. We also assume that the associated stationary broken problem is well-posed and satisfies the mesh-independent inf--sup condition
\begin{equation}\label{eq:broken-below}
\beta_{\rm br}\|\boldsymbol w\|_U
\le
\sup_{0\ne v\in V_{\rm br}}
\frac{|a_{\rm br}(\boldsymbol w,v)|}
{\|v\|_{V_{\rm br}}},
\qquad
\forall \boldsymbol w\in U,
\end{equation}
for some $\beta_{\rm br}>0$. Finally, the time-stepping analysis requires the field-coercivity property
\begin{equation}\label{eq:broken-field-coercivity}
a_{\rm br}(\boldsymbol w,w)
=
a(w,w)
\ge
\alpha\|w\|_V^2,
\qquad
\forall\boldsymbol w=(w,\widehat\mu)\in U.
\end{equation}
We assume that the solution $u$ of \eqref{eq:abstract-parabolic} admits a skeleton variable $\widehat\lambda\in C^0(\widehat\Lambda)$ such that $\boldsymbol u:=(u,\widehat\lambda)\in\mathcal X_{\rm br}$ satisfies
\begin{equation}\label{eq:abstract-broken}
(\partial_tu(t),v)_0
+
a_{\rm br}(\boldsymbol u(t),v)
=
(f(t),v)_0,
\qquad
\forall v\in V_{\rm br},
\quad
t\in[0,T].
\end{equation}
For the diffusion--advection--reaction problem \eqref{eq:dar-strong}, the corresponding broken bilinear form is
\begin{equation}\label{eq:dar-broken-bilinear}
\begin{aligned}
a_{{\rm dar},{\rm br}}
\bigl((w,\widehat\mu),v\bigr)
:={}&
\epsilon(\nabla w,\nabla_hv)_0
+
(\boldsymbol\beta\cdot\nabla w,v)_0
+
(\gamma w,v)_0
-
\langle\widehat\mu,v\rangle_{\Gamma_h}.
\end{aligned}
\end{equation}
Its boundedness with respect to the $U$- and $V_{\rm br}$-norms follows from the coefficient bounds and continuity of the normal-trace pairing. For this realization, we assume that the associated stationary broken formulation satisfies the mesh-independent inf--sup condition \eqref{eq:broken-below}. For the diffusion-only primal DPG formulation, the corresponding stability property is standard; see, e.g., \cite[Section~3]{DemkowiczGopalakrishnan2013}.

The field-coercivity property follows directly. If $\widehat\mu\in\widehat\Lambda$ and $w\in V$, then $\widehat\mu=\operatorname{tr}^{\rm div}_{\mathcal T_h}\boldsymbol q$ for some $\boldsymbol q\in H(\operatorname{div};\Omega)$. Since $w$ has zero trace on $\partial\Omega$, Green's formula gives
\begin{equation}\label{eq:trace-cancellation}
\langle\widehat\mu,w\rangle_{\Gamma_h}
=
(\operatorname{div}\boldsymbol q,w)_0
+
(\boldsymbol q,\nabla w)_0
=
0.
\end{equation}
Consequently,
$$
a_{{\rm dar},{\rm br}}
\bigl((w,\widehat\mu),w\bigr)
=
a_{\rm dar}(w,w),
$$
and therefore \eqref{eq:broken-field-coercivity} follows from \eqref{eq:dar-coercivity}. For $u\in\mathcal X$ satisfying \eqref{eq:dar-strong}, the compatible skeleton variable is the physical diffusive normal trace
\begin{equation}\label{eq:exact-skeleton}
\widehat\lambda(t)
:=
\operatorname{tr}^{\rm div}_{\mathcal T_h}
\bigl(\epsilon\nabla u(t)\bigr).
\end{equation}
Indeed, \eqref{eq:dar-strong} gives
$$
\operatorname{div}(\epsilon\nabla u(t))
=
\partial_tu(t)
+
\boldsymbol\beta\cdot\nabla u(t)
+
\gamma u(t)
-
f(t)
\in L^2(\Omega).
$$
Since the coefficients are time independent and bounded, $u\in C^0(V)$ and $\partial_tu,f\in C^0(L^2(\Omega))$ imply
$$
\epsilon\nabla u
\in
C^0\bigl([0,T];H(\operatorname{div};\Omega)\bigr).
$$
Continuity of the normal-trace operator therefore gives $\widehat\lambda\in C^0(\widehat\Lambda)$. With $\boldsymbol u=(u,\widehat\lambda)$, elementwise integration by parts shows that \eqref{eq:abstract-broken} is the broken-test formulation associated with \eqref{eq:dar-strong}.

\section{Semidiscrete Riesz representatives}\label{sec:ie-ideal}

In this section, we develop the semidiscrete residual framework for the abstract conforming and broken-test settings of Section~\ref{sec:prelim}. This argument applies to finite-element and neural-network approximations and does not assume that the trial sequence is generated by a finite-dimensional linear space or a particular minimum-residual algorithm. Given an arbitrary sequence of approximations at the discrete time levels, we associate with it a \emph{semidiscrete Riesz representative} of the residual induced by the time-marching scheme. For simplicity, we develop the analysis for Backward Euler. The same residual-scaling strategy adapts to other implicit time-stepping schemes, with the corresponding stability argument and consistency terms depending on the scheme.

\noindent
Let $0=t_0<t_1<\cdots<t_{N+1}=T$ be a uniform partition of $[0,T]$, with $\tau:=t_{n+1}-t_n$. We define the Backward Euler difference operator by
\begin{equation}\label{eq:be-diffop}
\delta_t w^{n+1}:=\frac{w^{n+1}-w^n}{\tau}.
\end{equation}
For the consistency analysis, we additionally assume $\partial_{tt}u\in L^2(0,T;L^2(\Omega))$. Under this regularity assumption, we can write
\begin{equation}\label{eq:be-defect}
\partial_t u^{n+1}
=
\delta_t u^{n+1}-\vartheta^{n+1},
\qquad
\vartheta^{n+1}
:=
\frac{1}{\tau}
\int_{t_n}^{t_{n+1}}
(t_n-t)\,\partial_{tt}u(t)\,dt.
\end{equation}
Here, $\vartheta^{n+1}$ is the Backward Euler consistency defect at time $t_{n+1}$. By the Cauchy--Schwarz inequality,
$$
\tau\sum_{n=0}^{N}\|\vartheta^{n+1}\|_0^2
\le
\frac{\tau^2}{3}
\|\partial_{tt}u\|_{L^2(0,T;L^2(\Omega))}^2.
$$
Hence, the consistency contribution in the squared stability estimates is of order $\tau^2$.

\subsection{Conforming formulation}\label{sec:standard_wf}
Using \eqref{eq:be-defect}, we infer that the exact solution of \eqref{eq:abstract-parabolic} satisfies the time-discrete relation
\begin{equation}\label{eq:exact-be-standard}
(u^{n+1},v)_0+\tau a(u^{n+1},v)
=
(u^n,v)_0+\tau(f^{n+1},v)_0+\tau(\vartheta^{n+1},v)_0,
\qquad \forall v\in V.
\end{equation}
Let $\{u_\star^n\}_{n=0}^{N+1}\subset V$ be any sequence of trial approximations.
We define the errors
$$
e_\star^n:=u^n-u_\star^n,
\qquad n=0,\dots,N+1,
$$
and the associated semidiscrete Riesz representative $\varepsilon_\star^{n+1}\in V$ by
\begin{equation}\label{eq:riesz-standard-def}
\tau(\varepsilon_\star^{n+1},v)_V
:=
(u_\star^n-u_\star^{n+1},v)_0
+\tau(f^{n+1},v)_0
-\tau a(u_\star^{n+1},v),
\qquad \forall v\in V.
\end{equation}
Subtracting the residual identity \eqref{eq:riesz-standard-def} from \eqref{eq:exact-be-standard} gives
\begin{equation}\label{eq:standard-error-eq}
(e_\star^{n+1},v)_0+\tau a(e_\star^{n+1},v)
=
(e_\star^n,v)_0+\tau(\varepsilon_\star^{n+1},v)_V+\tau(\vartheta^{n+1},v)_0,
\qquad \forall v\in V.
\end{equation}
\begin{theorem}[Semidiscrete stability: conforming formulation]
\label{thm:standard-stability}
For the semidiscrete Riesz representatives defined by \eqref{eq:riesz-standard-def}, the following estimate holds:
\begin{equation}\label{eq:standard-stability}
\|e_\star^{N+1}\|_0^2
+
\sum_{n=0}^{N}\|e_\star^{n+1}-e_\star^n\|_0^2
+
\tau\alpha\sum_{n=0}^{N}\|e_\star^{n+1}\|_V^2
\le
\|e_\star^0\|_0^2
+
\frac{2\tau}{\alpha}\sum_{n=0}^{N}\|\varepsilon_\star^{n+1}\|_V^2
+
\frac{2\tau C_P^2}{\alpha}\sum_{n=0}^{N}\|\vartheta^{n+1}\|_0^2.
\end{equation}
\end{theorem}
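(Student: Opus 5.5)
Our plan is to repeat the classical Backward Euler energy argument on the error equation \eqref{eq:standard-error-eq}. We test with $v=e_\star^{n+1}\in V$ and rewrite the $L^2$ terms with the polarization identity
$$
(e_\star^{n+1}-e_\star^n,e_\star^{n+1})_0
=
\tfrac12\|e_\star^{n+1}\|_0^2-\tfrac12\|e_\star^n\|_0^2+\tfrac12\|e_\star^{n+1}-e_\star^n\|_0^2 .
$$
The coercivity \eqref{eq:a-coercive} bounds $\tau a(e_\star^{n+1},e_\star^{n+1})$ from below by $\tau\alpha\|e_\star^{n+1}\|_V^2$. After multiplying by $2$, this yields
$$
\|e_\star^{n+1}\|_0^2-\|e_\star^n\|_0^2+\|e_\star^{n+1}-e_\star^n\|_0^2+2\tau\alpha\|e_\star^{n+1}\|_V^2
\le
2\tau(\varepsilon_\star^{n+1},e_\star^{n+1})_V+2\tau(\vartheta^{n+1},e_\star^{n+1})_0 .
$$

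Next we bound the two right-hand terms so that together they absorb exactly one copy of $\tau\alpha\|e_\star^{n+1}\|_V^2$. For the residual term, Cauchy--Schwarz and Young's inequality give
$$
2\tau(\varepsilon_\star^{n+1},e_\star^{n+1})_V\le \tfrac{2\tau}{\alpha}\|\varepsilon_\star^{n+1}\|_V^2+\tfrac{\tau\alpha}{2}\|e_\star^{n+1}\|_V^2 .
$$
For the defect term, we first apply the Poincaré inequality \eqref{eq:poincare}, $\|e_\star^{n+1}\|_0\le C_P\|e_\star^{n+1}\|_V$, and then Young's inequality:
$$
2\tau(\vartheta^{n+1},e_\star^{n+1})_0\le \tfrac{2\tau C_P^2}{\alpha}\|\vartheta^{n+1}\|_0^2+\tfrac{\tau\alpha}{2}\|e_\star^{n+1}\|_V^2 .
$$
Together these consume $\tau\alpha\|e_\star^{n+1}\|_V^2$ out of the $2\tau\alpha\|e_\star^{n+1}\|_V^2$ on the left, so exactly $\tau\alpha\|e_\star^{n+1}\|_V^2$ remains.

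Finally, we sum over $n=0,\dots,N$. The differences $\|e_\star^{n+1}\|_0^2-\|e_\star^n\|_0^2$ telescope to $\|e_\star^{N+1}\|_0^2-\|e_\star^0\|_0^2$. Moving $\|e_\star^0\|_0^2$ to the right-hand side gives \eqref{eq:standard-stability} with precisely the stated constants.

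There is no genuine obstacle here. The only point needing care is the bookkeeping of the Young weights: each right-hand term must be split with weight $\alpha/2$ so that the constants come out as $2/\alpha$ and $2C_P^2/\alpha$, and the remaining coercive term is $\tau\alpha$ rather than $2\tau\alpha$. We also note that testing with $e_\star^{n+1}$ is admissible because $u^{n+1},u_\star^{n+1}\in V$. No property of the trial sequence beyond membership in $V$ is used.
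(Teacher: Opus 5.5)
Your proposal is correct and follows essentially the same argument as the paper. You test the error equation with $e_\star^{n+1}$, apply the polarization identity and coercivity, use Young's inequality with weight $\alpha/2$ (plus Poincar\'e for the defect term) so that exactly one copy of $\tau\alpha\|e_\star^{n+1}\|_V^2$ is absorbed, and then sum and telescope, which yields the stated constants.
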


\begin{proof}
We test \eqref{eq:standard-error-eq} with $v=e_\star^{n+1}\in V$. This gives
$$
\|e_\star^{n+1}\|_0^2+\tau a(e_\star^{n+1},e_\star^{n+1})
=
(e_\star^n,e_\star^{n+1})_0
+\tau(\varepsilon_\star^{n+1},e_\star^{n+1})_V
+\tau(\vartheta^{n+1},e_\star^{n+1})_0.
$$
Using the identity
$$
(e_\star^n,e_\star^{n+1})_0
=
\frac12\Big(
\|e_\star^n\|_0^2+\|e_\star^{n+1}\|_0^2-\|e_\star^{n+1}-e_\star^n\|_0^2
\Big),
$$
we obtain
$$
\|e_\star^{n+1}\|_0^2
+\|e_\star^{n+1}-e_\star^n\|_0^2
+2\tau a(e_\star^{n+1},e_\star^{n+1})
=
\|e_\star^n\|_0^2
+2\tau(\varepsilon_\star^{n+1},e_\star^{n+1})_V
+2\tau(\vartheta^{n+1},e_\star^{n+1})_0.
$$
By the Cauchy--Schwarz inequality and Young's inequality,
$$
2\tau(\varepsilon_\star^{n+1},e_\star^{n+1})_V
\le
2\tau\|\varepsilon_\star^{n+1}\|_V\|e_\star^{n+1}\|_V
\le
\frac{2\tau}{\alpha}\|\varepsilon_\star^{n+1}\|_V^2
+\frac{\tau\alpha}{2}\|e_\star^{n+1}\|_V^2.
$$
Moreover, by Poincar\'e's inequality,
$$
2\tau(\vartheta^{n+1},e_\star^{n+1})_0
\le
2\tau\|\vartheta^{n+1}\|_0\|e_\star^{n+1}\|_0
\le
2\tau C_P\|\vartheta^{n+1}\|_0\|e_\star^{n+1}\|_V
\le
\frac{2\tau C_P^2}{\alpha}\|\vartheta^{n+1}\|_0^2
+\frac{\tau\alpha}{2}\|e_\star^{n+1}\|_V^2.
$$
Using the coercivity \eqref{eq:a-coercive}, we infer
$$
\|e_\star^{n+1}\|_0^2
+\|e_\star^{n+1}-e_\star^n\|_0^2
+\tau\alpha\|e_\star^{n+1}\|_V^2
\le
\|e_\star^n\|_0^2
+\frac{2\tau}{\alpha}\|\varepsilon_\star^{n+1}\|_V^2
+\frac{2\tau C_P^2}{\alpha}\|\vartheta^{n+1}\|_0^2.
$$
Finally, summing over $n=0,\dots,N$ yields \eqref{eq:standard-stability}.
\end{proof}

\begin{theorem}[Semidiscrete efficiency: conforming formulation]
\label{thm:standard-efficiency}
The semidiscrete Riesz representatives satisfy
\begin{equation}\label{eq:standard-efficiency}
\tau\sum_{n=0}^{N}\|\varepsilon_\star^{n+1}\|_V^2
\le
\frac{4C_P^2}{\tau}\sum_{n=0}^{N}\|e_\star^{n+1}-e_\star^n\|_0^2
+
4\tau M^2\sum_{n=0}^{N}\|e_\star^{n+1}\|_V^2
+
4\tau C_P^2\sum_{n=0}^{N}\|\vartheta^{n+1}\|_0^2.
\end{equation}
\end{theorem}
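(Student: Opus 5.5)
We rearrange the error equation \eqref{eq:standard-error-eq} so that it expresses the residual representative in terms of the error. Dividing by $\tau$ gives, for every $v\in V$,
$$
(\varepsilon_\star^{n+1},v)_V
=
\Bigl(\frac{e_\star^{n+1}-e_\star^n}{\tau},v\Bigr)_0
+
a(e_\star^{n+1},v)
-
(\vartheta^{n+1},v)_0 .
$$
Since $\varepsilon_\star^{n+1}\in V$ is a Riesz representative, we have $\|\varepsilon_\star^{n+1}\|_V=\sup_{0\ne v\in V}(\varepsilon_\star^{n+1},v)_V/\|v\|_V$. We may equally just take $v=\varepsilon_\star^{n+1}$ and divide by $\|\varepsilon_\star^{n+1}\|_V$. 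Each term on the right-hand side is then bounded separately.

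For the first and third terms we use Cauchy--Schwarz in $L^2(\Omega)$ followed by the Poincar\'e inequality \eqref{eq:poincare}, $\|v\|_0\le C_P\|v\|_V$. The first term contributes $\frac{C_P}{\tau}\|e_\star^{n+1}-e_\star^n\|_0\|v\|_V$, and the third contributes $C_P\|\vartheta^{n+1}\|_0\|v\|_V$. For the middle term, continuity \eqref{eq:a-continuity} gives $M\|e_\star^{n+1}\|_V\|v\|_V$. Combining these,
$$
\|\varepsilon_\star^{n+1}\|_V
\le
\frac{C_P}{\tau}\|e_\star^{n+1}-e_\star^n\|_0
+
M\|e_\star^{n+1}\|_V
+
C_P\|\vartheta^{n+1}\|_0 .
$$

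Next we square this bound. The elementary inequality $(a+b+c)^2\le 3(a^2+b^2+c^2)$ would give the sharper constant $3$; the stated constant $4$ follows a fortiori, for instance via $(a+b+c)^2\le 2a^2+2(b+c)^2\le 2a^2+4b^2+4c^2$. Either route gives
$$
\|\varepsilon_\star^{n+1}\|_V^2
\le
\frac{4C_P^2}{\tau^2}\|e_\star^{n+1}-e_\star^n\|_0^2
+
4M^2\|e_\star^{n+1}\|_V^2
+
4C_P^2\|\vartheta^{n+1}\|_0^2 .
$$
Multiplying by $\tau$ and summing over $n=0,\dots,N$ yields \eqref{eq:standard-efficiency}.

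There is no genuine obstacle in this argument. The only point needing care is the bookkeeping of the powers of $\tau$. The difference-quotient term carries $1/\tau^2$ before multiplication by $\tau$, which produces the $1/\tau$ factor on the increments in \eqref{eq:standard-efficiency}. This factor is the expected mismatch with the increment term appearing in Theorem~\ref{thm:standard-stability}.
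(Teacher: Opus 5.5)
Your proof is correct and follows essentially the paper's argument: test the error equation \eqref{eq:standard-error-eq} with $\varepsilon_\star^{n+1}$, then apply Cauchy--Schwarz, the Poincar\'e inequality, and continuity of $a$. The only difference is the finishing step: you divide by $\|\varepsilon_\star^{n+1}\|_V$ and square, which gives the sharper constant $3$, whereas the paper applies Young's inequality with weight $\tau/4$ to each term and absorbs $\tfrac{3\tau}{4}\|\varepsilon_\star^{n+1}\|_V^2$ into the left-hand side, which produces the constant $4$.
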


\begin{proof}
Testing \eqref{eq:standard-error-eq} with $v=\varepsilon_\star^{n+1}\in V$, we obtain
$$
\tau\|\varepsilon_\star^{n+1}\|_V^2
=
(e_\star^{n+1}-e_\star^n,\varepsilon_\star^{n+1})_0
+
\tau a(e_\star^{n+1},\varepsilon_\star^{n+1})
-
\tau(\vartheta^{n+1},\varepsilon_\star^{n+1})_0.
$$
Using Cauchy--Schwarz, Poincar\'e's inequality, and continuity of $a(\cdot,\cdot)$, we obtain
\begin{align*}
\tau\|\varepsilon_\star^{n+1}\|_V^2
&\le
\|e_\star^{n+1}-e_\star^n\|_0\,\|\varepsilon_\star^{n+1}\|_0
+
\tau M\|e_\star^{n+1}\|_V\|\varepsilon_\star^{n+1}\|_V
+
\tau\|\vartheta^{n+1}\|_0\,\|\varepsilon_\star^{n+1}\|_0
\\
&\le
C_P\|e_\star^{n+1}-e_\star^n\|_0\,\|\varepsilon_\star^{n+1}\|_V
+
\tau M\|e_\star^{n+1}\|_V\|\varepsilon_\star^{n+1}\|_V
+
\tau C_P\|\vartheta^{n+1}\|_0\,\|\varepsilon_\star^{n+1}\|_V.
\end{align*}
Applying Young's inequality to each product,
\begin{align*}
C_P\|e_\star^{n+1}-e_\star^n\|_0\,\|\varepsilon_\star^{n+1}\|_V
&\le
\frac{\tau}{4}\|\varepsilon_\star^{n+1}\|_V^2
+
\frac{C_P^2}{\tau}\|e_\star^{n+1}-e_\star^n\|_0^2,
\\
\tau M\|e_\star^{n+1}\|_V\|\varepsilon_\star^{n+1}\|_V
&\le
\frac{\tau}{4}\|\varepsilon_\star^{n+1}\|_V^2
+
\tau M^2\|e_\star^{n+1}\|_V^2,
\\
\tau C_P\|\vartheta^{n+1}\|_0\,\|\varepsilon_\star^{n+1}\|_V
&\le
\frac{\tau}{4}\|\varepsilon_\star^{n+1}\|_V^2
+
\tau C_P^2\|\vartheta^{n+1}\|_0^2.
\end{align*}
Hence,
$$
\tau\|\varepsilon_\star^{n+1}\|_V^2
\le
\frac{3\tau}{4}\|\varepsilon_\star^{n+1}\|_V^2
+
\frac{C_P^2}{\tau}\|e_\star^{n+1}-e_\star^n\|_0^2
+
\tau M^2\|e_\star^{n+1}\|_V^2
+
\tau C_P^2\|\vartheta^{n+1}\|_0^2 ,
$$
and therefore
$$
\tau\|\varepsilon_\star^{n+1}\|_V^2
\le
\frac{4C_P^2}{\tau}\|e_\star^{n+1}-e_\star^n\|_0^2
+
4\tau M^2\|e_\star^{n+1}\|_V^2
+
4\tau C_P^2\|\vartheta^{n+1}\|_0^2.
$$
Summing over $n=0,\dots,N$ concludes the proof.
\end{proof}

\subsection{Broken-test formulation}
\label{subsec:semidiscrete-broken}

We now consider the broken-test formulation introduced in Section~\ref{subsec:variational-formulations}. Let $\boldsymbol u_\star^n =(u_\star^n,\widehat\lambda_\star^n)\in U$, $n=0,\ldots,N+1$, be any sequence of trial approximations. We define $\boldsymbol e_\star^n :=\boldsymbol u^n-\boldsymbol u_\star^n =(e_\star^n,\widehat e_\star^{\,n})$, where $e_\star^n:=u^n-u_\star^n$ and $\widehat e_\star^{\,n} :=\widehat\lambda^n-\widehat\lambda_\star^n$. We then define the broken semidiscrete Riesz representative $\varepsilon_{\mathrm{br},\star}^{n+1}\in V_{\mathrm{br}}$ by
\begin{equation}\label{eq:riesz-broken-def}
\tau(\varepsilon_{\mathrm{br},\star}^{n+1},v)_{V_{\mathrm{br}}}
:=
(u_\star^n-u_\star^{n+1},v)_0
+\tau(f^{n+1},v)_0
-\tau a_{\mathrm{br}}(\boldsymbol{u}_\star^{n+1},v),
\qquad \forall v\in V_{\mathrm{br}}.
\end{equation}
Using the exact time-discrete broken formulation,
$$
(u^{n+1},v)_0+\tau a_{\mathrm{br}}(\boldsymbol{u}^{n+1},v)
=
(u^n,v)_0+\tau(f^{n+1},v)_0+\tau(\vartheta^{n+1},v)_0,
\qquad \forall v\in V_{\mathrm{br}},
$$
we immediately obtain the corresponding error equation
\begin{equation}\label{eq:broken-error-eq}
(e_\star^{n+1},v)_0+\tau a_{\mathrm{br}}(\boldsymbol{e}_\star^{n+1},v)
=
(e_\star^n,v)_0+\tau(\varepsilon_{\mathrm{br},\star}^{n+1},v)_{V_{\mathrm{br}}}
+\tau(\vartheta^{n+1},v)_0,
\qquad \forall v\in V_{\mathrm{br}}.
\end{equation}
For the analysis below, we will use the continuous embedding and $L^2$-control \eqref{eq:br-emb}--\eqref{eq:br-l2-control}.
\begin{theorem}[Semidiscrete stability: broken-test formulation]
\label{thm:broken-stability}
Under the above assumptions,
\begin{equation}\label{eq:broken-stability}
\|e_\star^{N+1}\|_0^2
+
\sum_{n=0}^{N}\|e_\star^{n+1}-e_\star^n\|_0^2
+
\tau\alpha\sum_{n=0}^{N}\|e_\star^{n+1}\|_V^2
\le
\|e_\star^0\|_0^2
+
\frac{2\tau C_{\mathrm{emb}}^2}{\alpha}
\sum_{n=0}^{N}\|\varepsilon_{\mathrm{br},\star}^{n+1}\|_{V_{\mathrm{br}}}^2
+
\frac{2\tau C_P^2}{\alpha}
\sum_{n=0}^{N}\|\vartheta^{n+1}\|_0^2.
\end{equation}
\end{theorem}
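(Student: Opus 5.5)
The plan is to mirror the proof of Theorem~\ref{thm:standard-stability}, using the broken error equation \eqref{eq:broken-error-eq} in place of \eqref{eq:standard-error-eq}. The one legitimate test function is $v=e_\star^{n+1}$. This is admissible because $e_\star^{n+1}=u^{n+1}-u_\star^{n+1}\in V\subset V_{\rm br}$.

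With this choice, the field-coercivity property \eqref{eq:broken-field-coercivity} handles the skeleton part. It gives $a_{\rm br}(\boldsymbol e_\star^{n+1},e_\star^{n+1})=a(e_\star^{n+1},e_\star^{n+1})\ge\alpha\|e_\star^{n+1}\|_V^2$, so the skeleton error $\widehat e_\star^{\,n+1}$ drops out entirely. Concretely, for the diffusion--advection--reaction case this is the trace cancellation \eqref{eq:trace-cancellation}.

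Next, I apply the polarization identity to $(e_\star^n,e_\star^{n+1})_0$ exactly as before. This yields
$$
\|e_\star^{n+1}\|_0^2+\|e_\star^{n+1}-e_\star^n\|_0^2+2\tau\alpha\|e_\star^{n+1}\|_V^2
\le
\|e_\star^n\|_0^2+2\tau(\varepsilon_{\mathrm{br},\star}^{n+1},e_\star^{n+1})_{V_{\mathrm{br}}}+2\tau(\vartheta^{n+1},e_\star^{n+1})_0 .
$$

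The residual term is the only place where the argument differs from the conforming case. I bound it by Cauchy--Schwarz in $V_{\rm br}$ and then the embedding \eqref{eq:br-emb}:
$$
2\tau(\varepsilon_{\mathrm{br},\star}^{n+1},e_\star^{n+1})_{V_{\mathrm{br}}}\le 2\tau C_{\rm emb}\|\varepsilon_{\mathrm{br},\star}^{n+1}\|_{V_{\mathrm{br}}}\|e_\star^{n+1}\|_V\le \frac{2\tau C_{\rm emb}^2}{\alpha}\|\varepsilon_{\mathrm{br},\star}^{n+1}\|_{V_{\mathrm{br}}}^2+\frac{\tau\alpha}{2}\|e_\star^{n+1}\|_V^2 .
$$
The defect term is treated verbatim as in Theorem~\ref{thm:standard-stability}, using Poincar\'e \eqref{eq:poincare} and Young. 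It contributes $\frac{2\tau C_P^2}{\alpha}\|\vartheta^{n+1}\|_0^2+\frac{\tau\alpha}{2}\|e_\star^{n+1}\|_V^2$.

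Absorbing the two $\frac{\tau\alpha}{2}\|e_\star^{n+1}\|_V^2$ terms into the left-hand side leaves $\tau\alpha\|e_\star^{n+1}\|_V^2$ there. Summing over $n=0,\dots,N$ then telescopes the $L^2$ terms and gives \eqref{eq:broken-stability}.

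I expect no genuine obstacle. The only points needing care are checking that $e_\star^{n+1}$ is an admissible test function in $V_{\rm br}$ and that field coercivity eliminates the flux error. Both are provided by the standing assumptions, since $u^{n+1}$ and $u_\star^{n+1}$ both lie in $V$.
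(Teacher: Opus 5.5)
Your proposal is correct and follows essentially the same route as the paper's proof. You test the broken error equation with $e_\star^{n+1}\in V\subset V_{\rm br}$, use field coercivity to remove the skeleton error, apply polarization, then bound the residual term with Cauchy--Schwarz in $V_{\rm br}$ plus the embedding \eqref{eq:br-emb}, and the defect term with Poincar\'e and Young, before summing.
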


\begin{proof}
The argument follows the proof of Theorem~\ref{thm:standard-stability}. Since $e_\star^{n+1}\in V\subset V_{\rm br}$, we may test
\eqref{eq:broken-error-eq} with $v=e_\star^{n+1}$. Using the identity
$$
2(e_\star^n,e_\star^{n+1})_0
=
\|e_\star^n\|_0^2
+
\|e_\star^{n+1}\|_0^2
-
\|e_\star^{n+1}-e_\star^n\|_0^2,
$$
together with \eqref{eq:broken-field-coercivity}, \eqref{eq:br-emb}, the Poincar\'e inequality, and Young's inequality, gives
$$
\begin{aligned}
\|e_\star^{n+1}\|_0^2
&+
\|e_\star^{n+1}-e_\star^n\|_0^2
+
\tau\alpha\|e_\star^{n+1}\|_V^2
\\
&\le
\|e_\star^n\|_0^2
+
\frac{2\tau C_{\rm emb}^2}{\alpha}
\|\varepsilon_{{\rm br},\star}^{n+1}\|_{V_{\rm br}}^2
+
\frac{2\tau C_P^2}{\alpha}
\|\vartheta^{n+1}\|_0^2.
\end{aligned}
$$
Summing over $n=0,\ldots,N$ proves the result.
\end{proof}

\begin{theorem}[Semidiscrete efficiency: broken-test formulation]
\label{thm:broken-efficiency}
Under the assumptions of Theorem~\ref{thm:broken-stability}, the following estimate holds:
\begin{equation}\label{eq:broken-efficiency}
\tau\sum_{n=0}^{N}\|\varepsilon_{\mathrm{br},\star}^{n+1}\|_{V_{\mathrm{br}}}^2
\le
\frac{4C_{0,\mathrm{br}}^2}{\tau}\sum_{n=0}^{N}\|e_\star^{n+1}-e_\star^n\|_0^2
+
4\tau M_{\mathrm{br}}^2\sum_{n=0}^{N}\|\boldsymbol{e}_\star^{n+1}\|_U^2
+
4\tau C_{0,\mathrm{br}}^2\sum_{n=0}^{N}\|\vartheta^{n+1}\|_0^2.
\end{equation}
\end{theorem}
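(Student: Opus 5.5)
The proof will mirror that of Theorem~\ref{thm:standard-efficiency}, replacing the conforming test space by $V_{\rm br}$. We test the broken error equation \eqref{eq:broken-error-eq} with $v=\varepsilon_{{\rm br},\star}^{n+1}\in V_{\rm br}$. This choice is admissible precisely because the residual representative lives in the broken test space. Rearranging gives
$$
\tau\|\varepsilon_{{\rm br},\star}^{n+1}\|_{V_{\rm br}}^2
=
(e_\star^{n+1}-e_\star^n,\varepsilon_{{\rm br},\star}^{n+1})_0
+\tau a_{\rm br}(\boldsymbol e_\star^{n+1},\varepsilon_{{\rm br},\star}^{n+1})
-\tau(\vartheta^{n+1},\varepsilon_{{\rm br},\star}^{n+1})_0 .
$$

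Next we bound the three terms on the right. For the two $L^2$ pairings we use Cauchy--Schwarz followed by the $L^2$-control \eqref{eq:br-l2-control}, which gives $\|\varepsilon_{{\rm br},\star}^{n+1}\|_0\le C_{0,\rm br}\|\varepsilon_{{\rm br},\star}^{n+1}\|_{V_{\rm br}}$. This plays the role the Poincar\'e constant $C_P$ played in the conforming case. For the bilinear form we use the boundedness \eqref{eq:broken-continuity}, which gives $\tau M_{\rm br}\|\boldsymbol e_\star^{n+1}\|_U\|\varepsilon_{{\rm br},\star}^{n+1}\|_{V_{\rm br}}$. Here we need $\boldsymbol e_\star^{n+1}\in U$. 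This holds because both $\boldsymbol u^{n+1}=(u^{n+1},\widehat\lambda^{n+1})$ and $\boldsymbol u_\star^{n+1}$ lie in $U$, by the assumption that the exact solution admits the skeleton variable $\widehat\lambda\in C^0(\widehat\Lambda)$.

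We then apply Young's inequality to each product in the form $ab\le \frac{\tau}{4}a^2+\frac{1}{\tau}b^2$, with the weights arranged exactly as in the proof of Theorem~\ref{thm:standard-efficiency}. This yields the bounds
$$
\tfrac{\tau}{4}\|\varepsilon_{{\rm br},\star}^{n+1}\|_{V_{\rm br}}^2+\tfrac{C_{0,\rm br}^2}{\tau}\|e_\star^{n+1}-e_\star^n\|_0^2,\qquad
\tfrac{\tau}{4}\|\varepsilon_{{\rm br},\star}^{n+1}\|_{V_{\rm br}}^2+\tau M_{\rm br}^2\|\boldsymbol e_\star^{n+1}\|_U^2,
$$
$$
\tfrac{\tau}{4}\|\varepsilon_{{\rm br},\star}^{n+1}\|_{V_{\rm br}}^2+\tau C_{0,\rm br}^2\|\vartheta^{n+1}\|_0^2 .
$$
Absorbing $\frac{3\tau}{4}\|\varepsilon_{{\rm br},\star}^{n+1}\|_{V_{\rm br}}^2$ into the left-hand side and multiplying by $4$ gives the per-step estimate. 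Summing over $n=0,\dots,N$ then yields \eqref{eq:broken-efficiency}.

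No step is a genuine obstacle; the argument is purely algebraic. The only points needing care are the two just noted. First, the $L^2$ control must come from \eqref{eq:br-l2-control} rather than from Poincar\'e, since broken functions need not vanish on $\partial\Omega$. Second, the bilinear-form term must be measured in the full $U$-norm, including the skeleton error $\widehat e_\star^{\,n+1}$, because the flux pairing does not vanish against broken test functions. For this reason the trace-cancellation identity \eqref{eq:trace-cancellation} is not available here, and the $U$-norm appears on the right-hand side instead of the $V$-norm.
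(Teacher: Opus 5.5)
Your proposal is correct and matches the paper's proof. You test \eqref{eq:broken-error-eq} with $v=\varepsilon_{\mathrm{br},\star}^{n+1}$, bound the $L^2$ pairings via \eqref{eq:br-l2-control} and the form via \eqref{eq:broken-continuity}, then apply Young's inequality with weight $\tau/4$, absorb, and sum over $n$. Your side remarks on why $C_{0,\mathrm{br}}$ replaces $C_P$ and why the full $U$-norm, including the skeleton error, appears are also accurate.
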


\begin{proof}
Testing \eqref{eq:broken-error-eq} with $v=\varepsilon_{\rm br,\star}^{n+1}$ and using
\eqref{eq:br-l2-control} and the continuity of $a_{\rm br}$ gives
$$
\begin{aligned}
\tau
\|\varepsilon_{{\rm br},\star}^{n+1}\|_{V_{\rm br}}^2
\le{}&
C_{0,{\rm br}}
\|e_\star^{n+1}-e_\star^n\|_0
\|\varepsilon_{{\rm br},\star}^{n+1}\|_{V_{\rm br}}
\\
&+
\tau M_{\rm br}
\|\boldsymbol e_\star^{n+1}\|_U
\|\varepsilon_{{\rm br},\star}^{n+1}\|_{V_{\rm br}}
\\
&+
\tau C_{0,{\rm br}}
\|\vartheta^{n+1}\|_0
\|\varepsilon_{{\rm br},\star}^{n+1}\|_{V_{\rm br}}.
\end{aligned}
$$
Applying Young's inequality to the three terms and summing over $n=0,\ldots,N$ yields the result.
\end{proof}
\begin{remark}[Stability versus efficiency]
The efficiency estimates above do not provide a $\tau$-uniform reverse bound for the error quantities appearing in Theorems~\ref{thm:standard-stability} and~\ref{thm:broken-stability}. In particular, the estimates for the residual representatives contain the stronger discrete time-increment term
$$
\frac{1}{\tau}
\sum_{n=0}^{N}
\|e_\star^{n+1}-e_\star^n\|_0^2,
$$
whereas the corresponding term in the stability estimates is
$$
\sum_{n=0}^{N}
\|e_\star^{n+1}-e_\star^n\|_0^2.
$$
Accordingly, the stability and efficiency estimates should not be interpreted as a two-sided equivalence in the discrete energy quantity used in the stability theorems, uniformly with respect to $\tau$. For the broken formulation there is an additional asymmetry: Theorem~\ref{thm:broken-efficiency} involves the full trial error $\|\boldsymbol e_\star^{n+1}\|_U$, including the skeleton component, whereas Theorem~\ref{thm:broken-stability} controls the parabolic energy of the field component. Thus no two-sided equivalence for the full broken trial error is asserted either.
\end{remark}

\section{Fully discrete FE-based MinRes formulations}\label{sec:fem-fullydisc}

We now introduce fully discrete finite-dimensional realizations of the semidiscrete framework from Section~\ref{sec:ie-ideal}. At each time level, we define the new approximation by minimizing the corresponding discrete residual in a dual norm induced by a finite-dimensional test space. This applies to both the conforming and broken formulations, and yields mixed saddle-point systems involving a discrete residual representative.

\subsection{Conforming formulation}

For the conforming case, let $X_h\subset V$ and $Y_h\subset V$ be finite-dimensional trial and test spaces, respectively. We assume that $Y_h$ is equipped with an inner product $(\cdot,\cdot)_{Y_h}$, with induced norm $\|\cdot\|_{Y_h}$, uniformly equivalent to $\|\cdot\|_V$ on $Y_h$. We further assume that the discrete time-step operator is injective on $X_h$, namely,
\begin{equation}\label{eq:discrete-injectivity-standard}
(w_h,v_h)_0+\tau a(w_h,v_h)=0
\quad \forall v_h\in Y_h
\qquad\Longrightarrow\qquad
w_h=0.
\end{equation}
In particular, this implies $\dim(X_h)\leq\dim(Y_h)$. 

The fully discrete MinRes approximation at time $t_{n+1}$ is then defined by
\begin{equation}\label{eq:minres-standard}
u_h^{n+1}\in \argmin_{w_h\in X_h}\eta_h^{n+1}(w_h),
\end{equation}
where
\begin{equation}\label{eq:eta-standard-discrete}
\eta_h^{n+1}(w_h)
:=
\sup_{0\neq v_h\in Y_h}
\frac{
(u_h^n,v_h)_0+\tau(f^{n+1},v_h)_0-(w_h,v_h)_0-\tau a(w_h,v_h)
}{\sqrt{\tau}\|v_h\|_{Y_h}}.
\end{equation}
Equivalently, introducing the discrete residual representative $\varepsilon_h^{n+1}\in Y_h$ through
\begin{equation}\label{eq:riesz-standard-discrete}
\tau(\varepsilon_h^{n+1},v_h)_{Y_h}
=
(u_h^n,v_h)_0+\tau(f^{n+1},v_h)_0-(u_h^{n+1},v_h)_0-\tau a(u_h^{n+1},v_h),
\qquad \forall v_h\in Y_h,
\end{equation}
we have that
\begin{equation}\label{eq:eta-eps-standard}
\eta_h^{n+1}(u_h^{n+1})=\sqrt{\tau}\,\|\varepsilon_h^{n+1}\|_{Y_h}.
\end{equation}
%
The following standard mixed characterization of the minimum-residual method follows from the first-order optimality conditions; see, e.g., \cite[Theorems~3.4 and~6.3]{demkowiczGopalakrishnan2025}.
\begin{proposition}\label{prop:minres-standard-mixed}
The following statements are equivalent:
\begin{itemize}
\item[(i)] $u_h^{n+1}\in X_h$ is the unique solution of the minimization problem \eqref{eq:minres-standard}.
\item[(ii)] There exists $\varepsilon_h^{n+1}\in Y_h$ such that $(\varepsilon_h^{n+1},u_h^{n+1})\in Y_h\times X_h$ satisfies
\begin{equation}\label{eq:mixed-standard}
\begin{aligned}
\tau(\varepsilon_h^{n+1},v_h)_{Y_h}
+(u_h^{n+1},v_h)_0+\tau a(u_h^{n+1},v_h)
&=(u_h^n,v_h)_0+\tau(f^{n+1},v_h)_0,
&&\forall v_h\in Y_h,\\
(z_h,\varepsilon_h^{n+1})_0+\tau a(z_h,\varepsilon_h^{n+1})
&=0,
&&\forall z_h\in X_h.
\end{aligned}
\end{equation}
\end{itemize}
Moreover, $\varepsilon_h^{n+1}$ is uniquely determined by $u_h^{n+1}$, and \eqref{eq:eta-eps-standard} holds.
\end{proposition}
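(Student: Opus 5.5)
The plan is to reduce the minimum-residual problem to a least-squares problem in the Hilbert space $Y_h$ and read off its normal equations. Introduce the discrete operator $B_h:X_h\to Y_h$ through the Riesz map of $(\cdot,\cdot)_{Y_h}$,
$$
(B_hw_h,v_h)_{Y_h}:=\tau^{-1}\bigl[(w_h,v_h)_0+\tau a(w_h,v_h)\bigr],\qquad \forall v_h\in Y_h,
$$
and the data representative $g_h\in Y_h$ by $(g_h,v_h)_{Y_h}:=\tau^{-1}\bigl[(u_h^n,v_h)_0+\tau(f^{n+1},v_h)_0\bigr]$. Both are well defined because $Y_h$ is finite dimensional with an inner product. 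With these definitions, the supremum in \eqref{eq:eta-standard-discrete} is the dual norm of a functional represented by $\sqrt{\tau}\,(g_h-B_hw_h)$. Hence
$$
\eta_h^{n+1}(w_h)=\sqrt{\tau}\,\|g_h-B_hw_h\|_{Y_h}.
$$
The injectivity assumption \eqref{eq:discrete-injectivity-standard} says exactly that $B_h$ is injective.

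For (i)$\Rightarrow$(ii), first establish existence and uniqueness of the minimizer. The set $B_hX_h$ is a finite-dimensional, hence closed, subspace of $Y_h$, so the orthogonal projection of $g_h$ onto it exists and is unique. Since $B_h$ is injective, this projection has exactly one preimage $u_h^{n+1}$. Next, set $\varepsilon_h^{n+1}:=g_h-B_hu_h^{n+1}$. Multiplying by $\tau$ and unwinding the definitions gives the first equation of \eqref{eq:mixed-standard}; this is the same as \eqref{eq:riesz-standard-discrete}. The variational characterization of the projection is $(g_h-B_hu_h^{n+1},B_hz_h)_{Y_h}=0$ for all $z_h\in X_h$. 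By the definition of $B_h$ and symmetry of $(\cdot,\cdot)_{Y_h}$, this equals $\tau^{-1}\bigl[(z_h,\varepsilon_h^{n+1})_0+\tau a(z_h,\varepsilon_h^{n+1})\bigr]=0$, which is the second equation. One can equivalently obtain it by differentiating the squared residual along $u_h^{n+1}+sz_h$ at $s=0$.

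For (ii)$\Rightarrow$(i), start from a solution of \eqref{eq:mixed-standard}. The first equation forces $\varepsilon_h^{n+1}=g_h-B_hu_h^{n+1}$, and the second states that this residual is orthogonal to $B_hX_h$. The Pythagorean identity then gives
$$
\|g_h-B_hw_h\|_{Y_h}^2=\|\varepsilon_h^{n+1}\|_{Y_h}^2+\|B_h(u_h^{n+1}-w_h)\|_{Y_h}^2 .
$$
So $u_h^{n+1}$ minimizes $\eta_h^{n+1}$, and injectivity of $B_h$ makes the minimizer unique. The uniqueness of $\varepsilon_h^{n+1}$ given $u_h^{n+1}$ follows from the first equation, since the Riesz representation in $Y_h$ is unique. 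Identity \eqref{eq:eta-eps-standard} follows from $\eta_h^{n+1}(u_h^{n+1})=\sqrt{\tau}\,\|g_h-B_hu_h^{n+1}\|_{Y_h}$.

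The main obstacle is bookkeeping rather than substance. The second equation of \eqref{eq:mixed-standard} has the form $b(z_h,\varepsilon_h)$, with the trial function in the first slot of $a$. One must check that this is exactly the adjoint pairing $(B_hz_h,\varepsilon_h)_{Y_h}$ and not a transposed form, since $a$ need not be symmetric. The check works because $B_h$ is defined by $a(w_h,\cdot)$ and only the inner product $(\cdot,\cdot)_{Y_h}$ is symmetric. The factors of $\tau$ also have to be tracked so that the scaling $\sqrt{\tau}\|v_h\|_{Y_h}$ in the denominator matches the factor $\tau$ in \eqref{eq:riesz-standard-discrete}.
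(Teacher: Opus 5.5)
Your proof is correct and takes the route the paper intends. The paper gives no self-contained proof; it appeals to the first-order optimality conditions of the residual minimization and cites Demkowicz--Gopalakrishnan. Your least-squares reduction in $Y_h$ (normal equations for (i)$\Rightarrow$(ii), the Pythagorean identity for the converse, and injectivity of $B_h$ from \eqref{eq:discrete-injectivity-standard} for uniqueness) is exactly that argument written out, including the correct argument order in $a(z_h,\varepsilon_h^{n+1})$.
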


When $X_h=Y_h$, the standard Backward Euler Galerkin solution has vanishing discrete residual and hence coincides with the unique MinRes solution. For the general case, subtracting \eqref{eq:mixed-standard} from the exact time-discrete identity \eqref{eq:exact-be-standard} gives the fully discrete error relation
\begin{equation}\label{eq:error-standard-fully-discrete}
(e_h^{n+1},v_h)_0+\tau a(e_h^{n+1},v_h)
=
(e_h^n,v_h)_0+\tau(\varepsilon_h^{n+1},v_h)_{Y_h}+\tau(\vartheta^{n+1},v_h)_0,
\qquad \forall v_h\in Y_h,
\end{equation}
where
$
e_h^n:=u^n-u_h^n.
$
The fully discrete error relation \eqref{eq:error-standard-fully-discrete} only holds for test functions in $Y_h$. Therefore, in contrast with the semidiscrete analysis of Section~\ref{sec:ie-ideal}, we cannot directly test with the full error $e_h^{n+1}\in V$. To recover a computable estimate, we compare the discrete residual with the corresponding residual acting on the whole test space.
For $w_h\in X_h$, define the time-step residual $\mathcal R_h^{n+1}(w_h)\in V'$ by
\begin{equation}\label{eq:continuous-residual-standard}
\mathcal R_h^{n+1}(w_h)(v)
:=
(u_h^n-w_h,v)_0
+\tau(f^{n+1},v)_0
-\tau a(w_h,v),
\qquad \forall v\in V.
\end{equation}
Thus, the fully discrete approximation $u_h^{n+1}$ minimizes the restriction of $\mathcal R_h^{n+1}(w_h)$ to $Y_h$, measured in the dual norm induced by $\sqrt{\tau}\|\cdot\|_{Y_h}$.
Let $\varepsilon_{\star,h}^{n+1}\in V$ denote the continuous Riesz representative of the residual generated by the computed approximation, namely
\begin{equation}\label{eq:continuous-riesz-standard-fully}
\tau(\varepsilon_{\star,h}^{n+1},v)_V
=
\mathcal R_h^{n+1}(u_h^{n+1})(v),
\qquad \forall v\in V.
\end{equation}
Then
$$
\sqrt{\tau}\|\varepsilon_{\star,h}^{n+1}\|_V
=
\|\mathcal R_h^{n+1}(u_h^{n+1})\|_{(\tau V)'},
$$
where $(\tau V)'$ denotes the dual norm induced by the scaled norm
$\sqrt{\tau}\|\cdot\|_V$.
Let $\Pi_h:V\to Y_h$ be a bounded projection or quasi-interpolation operator satisfying
\begin{equation}\label{eq:Pi-standard-bound}
\|\Pi_h v\|_{Y_h}\le C_\Pi \|v\|_V,
\qquad \forall v\in V,
\end{equation}
we define the complementary residual contribution by
\begin{equation}\label{eq:rho-standard-def}
\rho_h^{n+1}
:=
\sup_{0\neq v\in V}
\frac{
\mathcal R_h^{n+1}(u_h^{n+1})(v-\Pi_h v)
}
{\sqrt{\tau}\|v\|_V}.
\end{equation}
The quantity $\rho_h^{n+1}$ measures the part of the residual that is invisible to the discrete test space. The next result connects the computable discrete residual with the continuous residual entering the semidiscrete stability estimate.
\begin{lemma}\label{lem:eta-rho-standard}
Assume \eqref{eq:Pi-standard-bound} and that the norms
$\|\cdot\|_{Y_h}$ and $\|\cdot\|_V$ are uniformly equivalent on $Y_h$. Then
\begin{equation}\label{eq:eta-rho-standard-bound}
\eta_h^{n+1}(u_h^{n+1})
\lesssim
\sqrt{\tau}\|\varepsilon_{\star,h}^{n+1}\|_V
\le
C_\Pi \eta_h^{n+1}(u_h^{n+1})+\rho_h^{n+1}.
\end{equation}
\end{lemma}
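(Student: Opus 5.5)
Both inequalities reduce to comparing the dual norms of the single functional $\mathcal R:=\mathcal R_h^{n+1}(u_h^{n+1})\in V'$ measured on $Y_h$ and on $V$. Two facts set this up. First, by \eqref{eq:riesz-standard-discrete}, for every $v_h\in Y_h$ we have $\mathcal R(v_h)=\tau(\varepsilon_h^{n+1},v_h)_{Y_h}$. Hence
$$
\eta_h^{n+1}(u_h^{n+1})=\sup_{0\ne v_h\in Y_h}\frac{\mathcal R(v_h)}{\sqrt\tau\|v_h\|_{Y_h}}=\sqrt\tau\|\varepsilon_h^{n+1}\|_{Y_h},
$$
which is consistent with \eqref{eq:eta-eps-standard}. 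Second, by \eqref{eq:continuous-riesz-standard-fully}, $\mathcal R(v)=\tau(\varepsilon_{\star,h}^{n+1},v)_V$ for all $v\in V$. Since $Y_h\subset V$, this gives $\mathcal R(v_h)=\tau(\varepsilon_{\star,h}^{n+1},v_h)_V$ as well.

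\emph{Lower bound.} Let $c_1,c_2>0$ be the uniform equivalence constants with $c_1\|v_h\|_V\le\|v_h\|_{Y_h}\le c_2\|v_h\|_V$ on $Y_h$. For $0\ne v_h\in Y_h$, Cauchy--Schwarz in $V$ gives
$$
\frac{\mathcal R(v_h)}{\sqrt\tau\|v_h\|_{Y_h}}=\frac{\sqrt\tau(\varepsilon_{\star,h}^{n+1},v_h)_V}{\|v_h\|_{Y_h}}\le \frac{\sqrt\tau\|\varepsilon_{\star,h}^{n+1}\|_V\|v_h\|_V}{c_1\|v_h\|_V}.
$$
Taking the supremum yields $\eta_h^{n+1}(u_h^{n+1})\le c_1^{-1}\sqrt\tau\|\varepsilon_{\star,h}^{n+1}\|_V$, which is the first inequality with hidden constant $c_1^{-1}$.

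\emph{Upper bound.} Start from the dual characterization
$$
\sqrt\tau\|\varepsilon_{\star,h}^{n+1}\|_V=\sup_{0\ne v\in V}\frac{\mathcal R(v)}{\sqrt\tau\|v\|_V}.
$$
Fix $v\in V$ and split $\mathcal R(v)=\mathcal R(\Pi_hv)+\mathcal R(v-\Pi_hv)$. For the first term, $\Pi_hv\in Y_h$, so by the definition of $\eta_h^{n+1}$ as a supremum over $Y_h$ and by \eqref{eq:Pi-standard-bound},
$$
\mathcal R(\Pi_hv)\le \eta_h^{n+1}(u_h^{n+1})\sqrt\tau\|\Pi_hv\|_{Y_h}\le C_\Pi\,\eta_h^{n+1}(u_h^{n+1})\sqrt\tau\|v\|_V.
$$
If $\Pi_hv=0$ this bound holds trivially. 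For the second term, the definition \eqref{eq:rho-standard-def} gives directly $\mathcal R(v-\Pi_hv)\le\rho_h^{n+1}\sqrt\tau\|v\|_V$. Dividing by $\sqrt\tau\|v\|_V$ and taking the supremum over $v$ yields the second inequality, with exactly the constant $C_\Pi$ and no hidden constant.

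None of these steps is a real obstacle. The main care point is bookkeeping: the lower bound uses the norm equivalence on $Y_h$, while the upper bound uses only \eqref{eq:Pi-standard-bound} with $C_\Pi$ measured in the $Y_h$-norm, so the sharp constant $C_\Pi$ survives.
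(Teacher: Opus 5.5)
Your proposal is correct and follows the same route as the paper's proof. The lower bound comes from $Y_h\subset V$ together with the norm equivalence. The upper bound comes from splitting $\mathcal R(v)=\mathcal R(\Pi_hv)+\mathcal R(v-\Pi_hv)$, bounding the first term by $C_\Pi\,\eta_h^{n+1}(u_h^{n+1})\sqrt{\tau}\|v\|_V$ via \eqref{eq:Pi-standard-bound} and the second by the definition of $\rho_h^{n+1}$, then dividing and taking the supremum.
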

\begin{proof}
The first inequality follows from $Y_h\subset V$ and the uniform equivalence of the norms. For the second one, for arbitrary $v\in V$ we write
$$
\mathcal R_h^{n+1}(u_h^{n+1})(v)
=
\mathcal R_h^{n+1}(u_h^{n+1})(\Pi_h v)
+
\mathcal R_h^{n+1}(u_h^{n+1})(v-\Pi_h v).
$$
The first term is bounded by
$$
\eta_h^{n+1}(u_h^{n+1})\,\sqrt{\tau}\|\Pi_hv\|_{Y_h}
\le
C_\Pi \eta_h^{n+1}(u_h^{n+1})\,\sqrt{\tau}\|v\|_V,
$$
while the second term is bounded by the definition of $\rho_h^{n+1}$. Dividing by $\sqrt{\tau}\|v\|_V$ and taking the supremum over $v\in V\setminus\{0\}$ gives the result.
\end{proof}
We can now combine Lemma~\ref{lem:eta-rho-standard} with the semidiscrete stability estimate of Theorem~\ref{thm:standard-stability}, applied to the particular trial sequence $\{u_h^n\}_{n=0}^{N+1}$.
\begin{theorem}[Fully discrete reliability: conforming formulation]
\label{thm:fully-discrete-standard-reliability}
Let $\{u_h^n\}_{n=0}^{N+1}\subset X_h$ be generated by
\eqref{eq:minres-standard}. Then
\begin{align}
&\|e_h^{N+1}\|_0^2
+
\sum_{n=0}^{N}\|e_h^{n+1}-e_h^n\|_0^2
+
\tau\alpha\sum_{n=0}^{N}\|e_h^{n+1}\|_V^2
\nonumber\\
&\qquad\le
\|e_h^0\|_0^2
+
\frac{2}{\alpha}
\sum_{n=0}^{N}
\left(
C_\Pi\eta_h^{n+1}(u_h^{n+1})+\rho_h^{n+1}
\right)^2
+
\frac{2\tau C_P^2}{\alpha}
\sum_{n=0}^{N}\|\vartheta^{n+1}\|_0^2.
\label{eq:fully-discrete-standard-reliability}
\end{align}
\end{theorem}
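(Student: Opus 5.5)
The proof combines two earlier results: Theorem~\ref{thm:standard-stability}, applied to the particular trial sequence $u_\star^n:=u_h^n$, and the residual comparison of Lemma~\ref{lem:eta-rho-standard}. Theorem~\ref{thm:standard-stability} holds for an arbitrary sequence in $V$, and $X_h\subset V$, so it applies here.

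\textbf{Step 1: match the Riesz representatives.} With $u_\star^n=u_h^n$ we have $e_\star^n=e_h^n$. The semidiscrete representative $\varepsilon_\star^{n+1}$ of \eqref{eq:riesz-standard-def} has the right-hand side
$$
(u_h^n-u_h^{n+1},v)_0+\tau(f^{n+1},v)_0-\tau a(u_h^{n+1},v)=\mathcal R_h^{n+1}(u_h^{n+1})(v).
$$
This is exactly the right-hand side in \eqref{eq:continuous-riesz-standard-fully}. By uniqueness of Riesz representatives in $V$, $\varepsilon_\star^{n+1}=\varepsilon_{\star,h}^{n+1}$ for every $n$.

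This identification is the only point that needs care. In \eqref{eq:riesz-standard-def} the previous iterate is the trial value $u_\star^n$, and in $\mathcal R_h^{n+1}$ it is the computed $u_h^n$; these coincide under our choice. The discrete residual $\varepsilon_h^{n+1}\in Y_h$ plays no role in this step.

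\textbf{Step 2: apply the stability estimate.} Theorem~\ref{thm:standard-stability} then gives the left-hand side of \eqref{eq:fully-discrete-standard-reliability}, bounded by
$$
\|e_h^0\|_0^2+\frac{2}{\alpha}\sum_{n=0}^N\tau\|\varepsilon_{\star,h}^{n+1}\|_V^2+\frac{2\tau C_P^2}{\alpha}\sum_{n=0}^N\|\vartheta^{n+1}\|_0^2 .
$$

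\textbf{Step 3: bound the residual term.} We write $\tau\|\varepsilon_{\star,h}^{n+1}\|_V^2=(\sqrt{\tau}\|\varepsilon_{\star,h}^{n+1}\|_V)^2$. The upper bound of Lemma~\ref{lem:eta-rho-standard}, which uses only \eqref{eq:Pi-standard-bound}, gives $\sqrt{\tau}\|\varepsilon_{\star,h}^{n+1}\|_V\le C_\Pi\eta_h^{n+1}(u_h^{n+1})+\rho_h^{n+1}$. Both sides are nonnegative, so squaring preserves the inequality. Substituting into the bound from Step 2 and summing over $n$ yields \eqref{eq:fully-discrete-standard-reliability}.
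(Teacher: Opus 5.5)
Your proposal is correct and follows the paper's own proof: you apply Theorem~\ref{thm:standard-stability} to the trial sequence $u_\star^n=u_h^n$, identify its semidiscrete representative with $\varepsilon_{\star,h}^{n+1}$ from \eqref{eq:continuous-riesz-standard-fully}, and insert the squared upper bound of Lemma~\ref{lem:eta-rho-standard}. Your two additional remarks are also right: squaring is valid because the left-hand side $\sqrt{\tau}\|\varepsilon_{\star,h}^{n+1}\|_V$ is nonnegative, and that upper bound needs only \eqref{eq:Pi-standard-bound}, not the norm equivalence.
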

\begin{proof}
Apply Theorem~\ref{thm:standard-stability} to the trial sequence
$u_\star^n=u_h^n$. The corresponding semidiscrete residual
representative is $\varepsilon_{\star,h}^{n+1}$ defined in
\eqref{eq:continuous-riesz-standard-fully}.
Lemma~\ref{lem:eta-rho-standard} then gives
\eqref{eq:fully-discrete-standard-reliability}.
\end{proof}
For concrete differential operators, the complementary residual \eqref{eq:rho-standard-def} can be localized by standard residual-estimator arguments. For the diffusion case, see, e.g., \cite[Section~4.2.1]{fuhrer2025posteriori}.
%
%

\subsection{Broken-test formulation}
\label{subsec:fully-discrete-broken}

We now consider the broken-test formulation introduced in Section~\ref{subsec:variational-formulations} and analyzed semidiscretely in Section~\ref{subsec:semidiscrete-broken}. Let $\boldsymbol X_h\subset U$ and $Y_h^{\rm br}\subset V_{\rm br}$ be finite-dimensional trial and test spaces, respectively. We assume that $Y_h^{\rm br}$ is equipped with an inner product $(\cdot,\cdot)_{Y_h^{\rm br}}$, with induced norm $\|\cdot\|_{Y_h^{\rm br}}$, uniformly equivalent to $\|\cdot\|_{V_{\rm br}}$ on $Y_h^{\rm br}$. We further assume that the discrete broken time-step operator is injective on $\boldsymbol X_h$, namely,
\begin{equation}\label{eq:discrete-injectivity-broken}
(w_h,v_h)_0
+\tau a_{\rm br}(\boldsymbol w_h,v_h)
=0
\quad \forall v_h\in Y_h^{\rm br}
\qquad\Longrightarrow\qquad
\boldsymbol w_h=0,
\end{equation}
for $\boldsymbol w_h=(w_h,\widehat\mu_h)\in\boldsymbol X_h$.
In particular,
$
\dim(\boldsymbol X_h)\leq\dim(Y_h^{\rm br}).
$
The fully discrete broken MinRes problem is: find $\boldsymbol{u}_h^{n+1}\in \boldsymbol X_h$ such that
\begin{equation}\label{eq:minres-broken}
\boldsymbol{u}_h^{n+1}\in \argmin_{\boldsymbol{w}_h\in \boldsymbol X_h}\eta_{{\rm br},h}^{n+1}(\boldsymbol{w}_h),
\end{equation}
where
\begin{equation}\label{eq:eta-broken-discrete}
\eta_{{\rm br},h}^{n+1}(\boldsymbol{w}_h)
:=
\sup_{0\neq v_h\in Y_h^{\rm br}}
\frac{
(u_h^n,v_h)_0+\tau(f^{n+1},v_h)_0-(w_h,v_h)_0-\tau a_{\rm br}(\boldsymbol{w}_h,v_h)
}
{\sqrt{\tau}\|v_h\|_{Y_h^{\rm br}}}.
\end{equation}
Introducing the discrete broken residual representative $\varepsilon_{{\rm br},h}^{n+1}\in Y_h^{\rm br}$ through
\begin{equation}\label{eq:riesz-broken-discrete}
\tau(\varepsilon_{{\rm br},h}^{n+1},v_h)_{Y_h^{\rm br}}
=
(u_h^n,v_h)_0+\tau(f^{n+1},v_h)_0-(u_h^{n+1},v_h)_0-\tau a_{\rm br}(\boldsymbol{u}_h^{n+1},v_h),
\qquad \forall v_h\in Y_h^{\rm br},
\end{equation}
we again obtain
\begin{equation}\label{eq:eta-eps-broken}
\eta_{{\rm br},h}^{n+1}(\boldsymbol{u}_h^{n+1})
=
\sqrt{\tau}\,\|\varepsilon_{{\rm br},h}^{n+1}\|_{Y_h^{\rm br}}.
\end{equation}
The next result follows from the same minimum-residual optimality argument as Proposition~\ref{prop:minres-standard-mixed}. 
\begin{proposition}\label{prop:minres-broken-mixed}
The following statements are equivalent:
\begin{itemize}
\item[(i)] $\boldsymbol{u}_h^{n+1}\in \boldsymbol X_h$ is the unique
solution of \eqref{eq:minres-broken}.
\item[(ii)] There exists $\varepsilon_{{\rm br},h}^{n+1}\in Y_h^{\rm br}$ such that
$(\varepsilon_{{\rm br},h}^{n+1},\boldsymbol{u}_h^{n+1})\in Y_h^{\rm br}\times \boldsymbol X_h$ satisfies
\begin{equation}\label{eq:mixed-broken}
\begin{aligned}
\tau(\varepsilon_{{\rm br},h}^{n+1},v_h)_{Y_h^{\rm br}}
+(u_h^{n+1},v_h)_0+\tau a_{\rm br}(\boldsymbol{u}_h^{n+1},v_h)
&=
(u_h^n,v_h)_0+\tau(f^{n+1},v_h)_0,
&&\forall v_h\in Y_h^{\rm br},
\\
(z_h,\varepsilon_{{\rm br},h}^{n+1})_0+\tau a_{\rm br}(\boldsymbol{z}_h,\varepsilon_{{\rm br},h}^{n+1})
&=0,
&&\forall \boldsymbol{z}_h=(z_h,\widehat\mu_h)\in \boldsymbol X_h.
\end{aligned}
\end{equation}
\end{itemize}
Moreover, \eqref{eq:eta-eps-broken} holds.
\end{proposition}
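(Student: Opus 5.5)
The plan is to prove Proposition~\ref{prop:minres-broken-mixed} in the standard way for minimum-residual methods: rewrite the discrete dual norm through a discrete Riesz map, which turns \eqref{eq:minres-broken} into a quadratic least-squares problem on $\boldsymbol X_h$; its normal equations are then exactly the mixed system \eqref{eq:mixed-broken}.

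\textbf{Step 1: Riesz reformulation.} Define the discrete time-step operator $B_h:\boldsymbol X_h\to Y_h^{\rm br}$ by
$$
\tau(B_h\boldsymbol w_h,v_h)_{Y_h^{\rm br}}:=(w_h,v_h)_0+\tau a_{\rm br}(\boldsymbol w_h,v_h),
\qquad \forall v_h\in Y_h^{\rm br}.
$$
Define also $\ell_h\in Y_h^{\rm br}$ by $\tau(\ell_h,v_h)_{Y_h^{\rm br}}:=(u_h^n,v_h)_0+\tau(f^{n+1},v_h)_0$. Since $Y_h^{\rm br}$ is finite-dimensional and carries an inner product, both objects are well defined. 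For every $\boldsymbol w_h\in\boldsymbol X_h$, the numerator in \eqref{eq:eta-broken-discrete} equals $\tau(\ell_h-B_h\boldsymbol w_h,v_h)_{Y_h^{\rm br}}$. Cauchy--Schwarz then gives
$$
\eta_{{\rm br},h}^{n+1}(\boldsymbol w_h)=\sqrt{\tau}\,\|\ell_h-B_h\boldsymbol w_h\|_{Y_h^{\rm br}},
$$
with the supremum attained at $v_h=\ell_h-B_h\boldsymbol w_h$. Setting $\varepsilon_{{\rm br},h}^{n+1}:=\ell_h-B_h\boldsymbol u_h^{n+1}$ reproduces \eqref{eq:riesz-broken-discrete}. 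This identification yields \eqref{eq:eta-eps-broken} and the first line of \eqref{eq:mixed-broken}. It also shows that $\varepsilon_{{\rm br},h}^{n+1}$ is uniquely determined by $\boldsymbol u_h^{n+1}$.

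\textbf{Step 2: existence and uniqueness of the minimizer.} Minimizing $\eta_{{\rm br},h}^{n+1}$ is the same as minimizing the quadratic functional $J(\boldsymbol w_h):=\tfrac{\tau}{2}\|\ell_h-B_h\boldsymbol w_h\|_{Y_h^{\rm br}}^2$. The injectivity hypothesis \eqref{eq:discrete-injectivity-broken} says precisely that $B_h$ is injective. Hence the Hessian $\tau B_h^*B_h$ is positive definite on the finite-dimensional space $\boldsymbol X_h$, so $J$ is strictly convex and coercive. Consequently a unique minimizer exists, and it is characterized by the vanishing of the first variation:
$$
\tau(\ell_h-B_h\boldsymbol u_h^{n+1},B_h\boldsymbol z_h)_{Y_h^{\rm br}}=0,
\qquad \forall\boldsymbol z_h\in\boldsymbol X_h.
$$

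\textbf{Step 3: the equivalence.} By the definition of $B_h$, the left-hand side of this condition is $(z_h,\varepsilon_{{\rm br},h}^{n+1})_0+\tau a_{\rm br}(\boldsymbol z_h,\varepsilon_{{\rm br},h}^{n+1})$. This is exactly the second line of \eqref{eq:mixed-broken}, which gives (i)$\Rightarrow$(ii).

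For (ii)$\Rightarrow$(i), start from a solution pair of \eqref{eq:mixed-broken}. The first equation forces $\varepsilon_{{\rm br},h}^{n+1}=\ell_h-B_h\boldsymbol u_h^{n+1}$. The second equation is then the optimality condition above. Because $J$ is convex, the optimality condition is sufficient, so $\boldsymbol u_h^{n+1}$ is a minimizer, and by Step 2 it is the unique one.

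\textbf{Main obstacle.} No step is deep; the only point needing care is the order of arguments in the bilinear form. In the second line of \eqref{eq:mixed-broken}, the form $a_{\rm br}$ is applied with the trial argument $\boldsymbol z_h$ first and $\varepsilon_{{\rm br},h}^{n+1}$ in the test slot. It is not symmetric, so I must check that the first variation of $J$ produces exactly this ordering, and not its transpose.
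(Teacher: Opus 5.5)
Your proposal is correct and takes the same route as the paper. The paper gives no separate argument here: it refers to the first-order optimality (normal-equation) characterization of discrete minimum-residual methods used for Proposition~\ref{prop:minres-standard-mixed}. Your Riesz-map and least-squares argument, with the injectivity assumption making $B_h$ injective and hence giving uniqueness, is exactly that argument written out. The argument-ordering concern you flag is already settled in your Step~3: taking $v_h=\varepsilon_{{\rm br},h}^{n+1}$ in the definition of $B_h\boldsymbol z_h$ automatically puts $\boldsymbol z_h$ in the trial slot of $a_{\rm br}$.
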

Subtracting \eqref{eq:mixed-broken} from the exact broken time-discrete formulation yields the fully discrete error equation
\begin{equation}\label{eq:error-broken-fully-discrete}
(e_h^{n+1},v_h)_0+\tau a_{\rm br}(\boldsymbol{e}_h^{n+1},v_h)
=
(e_h^n,v_h)_0
+\tau(\varepsilon_{{\rm br},h}^{n+1},v_h)_{Y_h^{\rm br}}
+\tau(\vartheta^{n+1},v_h)_0,
\qquad \forall v_h\in Y_h^{\rm br},
\end{equation}
where
$$
\boldsymbol{e}_h^n:=\boldsymbol{u}^n-\boldsymbol{u}_h^n.
$$
\begin{remark}[Recovery of the standard Galerkin field]
\label{rem:recovery-galerkin-field}
If the broken test space is equipped with the time-step-dependent inner product
$$
(v_h,w_h)_{Y_h^{\rm br},\tau}
:=
\frac{1}{\tau}(v_h,w_h)_0
+
(\nabla_hv_h,\nabla_hw_h)_0,
$$
and the usual compatibility assumptions between the field and test spaces hold, then for the heat equation the field component of the primal DPG approximation coincides with the standard Backward Euler Galerkin approximation; see \cite[Section~2.2.3]{fuhrerHeuerKarkulik2021}. This identity does not hold for a general choice of the discrete test inner product.
\end{remark}
We next introduce the continuous broken residual associated with the computed approximation. For $\boldsymbol{w}_h=(w_h,\widehat\mu_h)\in \boldsymbol X_h$, define $\mathcal R_{{\rm br},h}^{n+1}(\boldsymbol{w}_h)\in V_{\rm br}'$ by
\begin{equation}\label{eq:continuous-residual-broken}
\mathcal R_{{\rm br},h}^{n+1}(\boldsymbol{w}_h)(v)
:=
(u_h^n-w_h,v)_0
+\tau(f^{n+1},v)_0
-\tau a_{\rm br}(\boldsymbol{w}_h,v),
\qquad \forall v\in V_{\rm br}.
\end{equation}
Let $\varepsilon_{{\rm br},\star,h}^{n+1}\in V_{\rm br}$ be the continuous broken Riesz representative defined by
\begin{equation}\label{eq:continuous-riesz-broken-fully}
\tau(\varepsilon_{{\rm br},\star,h}^{n+1},v)_{V_{\rm br}}
=
\mathcal R_{{\rm br},h}^{n+1}(\boldsymbol{u}_h^{n+1})(v),
\qquad \forall v\in V_{\rm br}.
\end{equation}
Thus
$$
\sqrt{\tau}\|\varepsilon_{{\rm br},\star,h}^{n+1}\|_{V_{\rm br}}
=
\|\mathcal R_{{\rm br},h}^{n+1}(\boldsymbol{u}_h^{n+1})\|_{(\tau V_{\rm br})'}.
$$
To compare the discrete and continuous residuals, let
$$
I_h^{\rm br}:V_{\rm br}\to Y_h^{\rm br}
$$
be a bounded projection or quasi-interpolation operator satisfying
\begin{equation}\label{eq:broken-interpolant-stability}
\|I_h^{\rm br}v\|_{Y_h^{\rm br}}
\le
C_I\|v\|_{V_{\rm br}},
\qquad
\forall v\in V_{\rm br},
\end{equation}
where $C_I>0$ is independent of $h$ and $\tau$. We define the complementary broken residual by
\begin{equation}\label{eq:rho-broken-fe-def}
\rho_{{\rm br},h}^{n+1}
:=
\sup_{0\neq v\in V_{\rm br}}
\frac{
\mathcal R_{{\rm br},h}^{n+1}(\boldsymbol{u}_h^{n+1})
(v-I_h^{\rm br}v)
}
{\sqrt{\tau}\|v\|_{V_{\rm br}}}.
\end{equation}
\begin{lemma}\label{lem:eta-rho-broken-fe}
Assume \eqref{eq:broken-interpolant-stability} and that the norms $\|\cdot\|_{Y_h^{\rm br}}$ and $\|\cdot\|_{V_{\rm br}}$ are uniformly equivalent on $Y_h^{\rm br}$. Then
\begin{equation}\label{eq:eta-rho-broken-fe-bound}
\eta_{{\rm br},h}^{n+1}(\boldsymbol{u}_h^{n+1})
\lesssim
\sqrt{\tau}
\|\varepsilon_{{\rm br},\star,h}^{n+1}\|_{V_{\rm br}}
\le
C_I
\eta_{{\rm br},h}^{n+1}(\boldsymbol{u}_h^{n+1})
+
\rho_{{\rm br},h}^{n+1}.
\end{equation}
\end{lemma}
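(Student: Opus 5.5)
The proof mirrors that of Lemma~\ref{lem:eta-rho-standard}, with $V$, $\Pi_h$, $Y_h$ replaced by $V_{\rm br}$, $I_h^{\rm br}$, $Y_h^{\rm br}$. The key observation is that $\eta_{{\rm br},h}^{n+1}(\boldsymbol u_h^{n+1})$ is the dual norm of the restriction of $\mathcal R_{{\rm br},h}^{n+1}(\boldsymbol u_h^{n+1})$ to $Y_h^{\rm br}$, measured with $\sqrt{\tau}\|\cdot\|_{Y_h^{\rm br}}$. This follows by comparing \eqref{eq:eta-broken-discrete} with \eqref{eq:continuous-residual-broken}, since the numerators coincide for $v_h\in Y_h^{\rm br}$. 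On the other hand, $\sqrt{\tau}\|\varepsilon_{{\rm br},\star,h}^{n+1}\|_{V_{\rm br}}$ is the dual norm of the same functional on all of $V_{\rm br}$, measured with $\sqrt{\tau}\|\cdot\|_{V_{\rm br}}$, by \eqref{eq:continuous-riesz-broken-fully} and the Riesz identity.

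For the first inequality, fix $0\ne v_h\in Y_h^{\rm br}\subset V_{\rm br}$. Then
$$
\mathcal R_{{\rm br},h}^{n+1}(\boldsymbol u_h^{n+1})(v_h)
=\tau(\varepsilon_{{\rm br},\star,h}^{n+1},v_h)_{V_{\rm br}}
\le \sqrt{\tau}\|\varepsilon_{{\rm br},\star,h}^{n+1}\|_{V_{\rm br}}\,\sqrt{\tau}\|v_h\|_{V_{\rm br}}.
$$
The uniform equivalence gives $\|v_h\|_{V_{\rm br}}\le c\|v_h\|_{Y_h^{\rm br}}$ with $c$ independent of $h$ and $\tau$. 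Dividing by $\sqrt{\tau}\|v_h\|_{Y_h^{\rm br}}$ and taking the supremum yields $\eta_{{\rm br},h}^{n+1}(\boldsymbol u_h^{n+1})\le c\sqrt{\tau}\|\varepsilon_{{\rm br},\star,h}^{n+1}\|_{V_{\rm br}}$.

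For the second inequality, take any $0\ne v\in V_{\rm br}$ and split it as $v=I_h^{\rm br}v+(v-I_h^{\rm br}v)$. By linearity of the residual functional, the first part satisfies
$$
\mathcal R_{{\rm br},h}^{n+1}(\boldsymbol u_h^{n+1})(I_h^{\rm br}v)\le \eta_{{\rm br},h}^{n+1}(\boldsymbol u_h^{n+1})\sqrt{\tau}\|I_h^{\rm br}v\|_{Y_h^{\rm br}}\le C_I\,\eta_{{\rm br},h}^{n+1}(\boldsymbol u_h^{n+1})\sqrt{\tau}\|v\|_{V_{\rm br}},
$$
where the first step uses the definition of $\eta_{{\rm br},h}^{n+1}$ (trivially if $I_h^{\rm br}v=0$) and the second uses \eqref{eq:broken-interpolant-stability}. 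The second part is bounded by $\rho_{{\rm br},h}^{n+1}\sqrt{\tau}\|v\|_{V_{\rm br}}$ by \eqref{eq:rho-broken-fe-def}. Dividing by $\sqrt{\tau}\|v\|_{V_{\rm br}}$ and taking the supremum over $v$ gives the claim, since that supremum equals $\sqrt{\tau}\|\varepsilon_{{\rm br},\star,h}^{n+1}\|_{V_{\rm br}}$.

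There is no real obstacle here. The only point needing care is the identification of the two dual norms: the numerator of $\eta_{{\rm br},h}^{n+1}$ must coincide with the continuous residual functional evaluated on $Y_h^{\rm br}$, including the same $u_h^n$ data. It is also worth noting that the hidden constant in ``$\lesssim$'' depends only on the norm-equivalence constant, so it is independent of $h$ and $\tau$.
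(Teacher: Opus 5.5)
Your proposal is correct and follows the paper's approach: the paper proves this lemma by transferring the proof of Lemma~\ref{lem:eta-rho-standard}, with $V$, $Y_h$, $\Pi_h$ replaced by $V_{\rm br}$, $Y_h^{\rm br}$, $I_h^{\rm br}$. Both arguments get the first inequality from $Y_h^{\rm br}\subset V_{\rm br}$ and norm equivalence, and the second from the splitting $v=I_h^{\rm br}v+(v-I_h^{\rm br}v)$ combined with \eqref{eq:broken-interpolant-stability} and the definition of $\rho_{{\rm br},h}^{n+1}$.
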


\begin{proof}
The proof is identical to that of Lemma~\ref{lem:eta-rho-standard}, with $V$, $Y_h$, and $\Pi_h$ replaced by $V_{\rm br}$, $Y_h^{\rm br}$, and $I_h^{\rm br}$, respectively.
\end{proof}
\begin{theorem}[Fully discrete reliability: broken-test formulation]
\label{thm:fully-discrete-broken-reliability}
Let $\{\boldsymbol{u}_h^n\}_{n=0}^{N+1}\subset \boldsymbol X_h$ be generated by
\eqref{eq:minres-broken}. Then
\begin{align}
&\|e_h^{N+1}\|_0^2
+
\sum_{n=0}^{N}\|e_h^{n+1}-e_h^n\|_0^2
+
\tau\alpha\sum_{n=0}^{N} \|e_h^{n+1}\|_V^2
\nonumber\\
&\qquad\le
\|e_h^0\|_0^2
+
\frac{2C_{\rm emb}^2}{\alpha}
\sum_{n=0}^{N}
\left(
C_I \eta_{{\rm br},h}^{n+1}(\boldsymbol{u}_h^{n+1})
+
\rho_{{\rm br},h}^{n+1}
\right)^2
+
\frac{2\tau C_P^2}{\alpha}
\sum_{n=0}^{N}\|\vartheta^{n+1}\|_0^2.
\label{eq:fully-discrete-broken-reliability}
\end{align}
\end{theorem}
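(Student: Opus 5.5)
The argument mirrors the proof of Theorem~\ref{thm:fully-discrete-standard-reliability}. We apply the semidiscrete broken stability estimate, Theorem~\ref{thm:broken-stability}, to the particular trial sequence $\boldsymbol u_\star^n=\boldsymbol u_h^n=(u_h^n,\widehat\lambda_h^n)\in\boldsymbol X_h\subset U$. That theorem places no restriction on how the trial sequence is generated, so it applies verbatim with $e_\star^n=e_h^n$.

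The first step is to identify the semidiscrete broken Riesz representative of this sequence. Comparing definition \eqref{eq:riesz-broken-def} with the continuous residual \eqref{eq:continuous-residual-broken}, the right-hand side of \eqref{eq:riesz-broken-def} with $u_\star^n=u_h^n$ and $\boldsymbol u_\star^{n+1}=\boldsymbol u_h^{n+1}$ is exactly $\mathcal R_{{\rm br},h}^{n+1}(\boldsymbol u_h^{n+1})(v)$. Hence $\varepsilon_{{\rm br},\star}^{n+1}=\varepsilon_{{\rm br},\star,h}^{n+1}$, the representative from \eqref{eq:continuous-riesz-broken-fully}.

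Theorem~\ref{thm:broken-stability} then yields the left-hand side of \eqref{eq:fully-discrete-broken-reliability}, bounded by $\|e_h^0\|_0^2$, plus $\frac{2C_{\rm emb}^2}{\alpha}\sum_n\tau\|\varepsilon_{{\rm br},\star,h}^{n+1}\|_{V_{\rm br}}^2$, plus the consistency term.

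The second step is to bound each $\tau\|\varepsilon_{{\rm br},\star,h}^{n+1}\|_{V_{\rm br}}^2$. We use the upper inequality in Lemma~\ref{lem:eta-rho-broken-fe}: squaring $\sqrt{\tau}\|\varepsilon_{{\rm br},\star,h}^{n+1}\|_{V_{\rm br}}\le C_I\eta_{{\rm br},h}^{n+1}(\boldsymbol u_h^{n+1})+\rho_{{\rm br},h}^{n+1}$ gives the claimed summand. Substituting this and summing over $n=0,\dots,N$ gives \eqref{eq:fully-discrete-broken-reliability}.

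No real obstacle is expected; the only point needing care is bookkeeping of the $\tau$ scaling. The factor $\tau$ multiplying the residual sum in Theorem~\ref{thm:broken-stability} must be absorbed into $\sqrt{\tau}$ in the lemma, so that no stray power of $\tau$ remains in front of the $(C_I\eta+\rho)^2$ sum.

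It is also worth checking that the minimizer is well defined. Injectivity \eqref{eq:discrete-injectivity-broken} together with Proposition~\ref{prop:minres-broken-mixed} guarantees the sequence $\{\boldsymbol u_h^n\}$ exists. The lemma is applied at each time level, whose residual involves the previously computed $u_h^n$, consistently with \eqref{eq:continuous-residual-broken}.
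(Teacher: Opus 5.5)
Your proposal is correct and follows the paper's proof. Both apply Theorem~\ref{thm:broken-stability} to $\boldsymbol u_\star^n=\boldsymbol u_h^n$ and identify the resulting semidiscrete representative with $\varepsilon_{{\rm br},\star,h}^{n+1}$. Both then insert the squared upper bound of Lemma~\ref{lem:eta-rho-broken-fe}, using $\tau\|\varepsilon_{{\rm br},\star,h}^{n+1}\|_{V_{\rm br}}^2=\bigl(\sqrt{\tau}\|\varepsilon_{{\rm br},\star,h}^{n+1}\|_{V_{\rm br}}\bigr)^2$ to absorb the time-step factor, as you describe.
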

\begin{proof}
Apply Theorem~\ref{thm:broken-stability} to $\boldsymbol u_\star^n=\boldsymbol u_h^n$. The corresponding semidiscrete broken residual representative is $\varepsilon_{{\rm br},\star,h}^{n+1}$ defined in \eqref{eq:continuous-riesz-broken-fully}. Lemma~\ref{lem:eta-rho-broken-fe} then gives \eqref{eq:fully-discrete-broken-reliability}.
\end{proof}
\begin{remark}[Localization of the complementary residual]
\label{rem:rho-broken-localization}
For broken polynomial test spaces, the Riesz problems defining $\eta_{{\rm br},h}^{n+1}$ are local, element by element. To obtain an explicit localization of the complementary residual, we specialize in this remark to the diffusion--advection--reaction realization
\eqref{eq:dar-strong}. The residual $\rho_{{\rm br},h}^{n+1}$ can then be localized provided that the operator $I_h^{\rm br}$ introduced above satisfies the usual local approximation properties. Assume in addition that $u_h^{n+1}|_K\in H^2(K)$ for every $K\in\mathcal T_h$, as is the case for the polynomial trial spaces considered below. Let
$$
\boldsymbol u_h^{n+1}
=
(u_h^{n+1},\widehat\lambda_h^{n+1})
\in\boldsymbol X_h
$$
and define the element residual
\begin{equation}\label{eq:broken-element-residual}
R_K^{n+1}
:=
f^{n+1}
-\delta_t u_h^{n+1}
+\epsilon\Delta u_h^{n+1}
-\boldsymbol\beta\cdot\nabla u_h^{n+1}
-\gamma u_h^{n+1}
\qquad\text{in }K,
\end{equation}
together with the flux residual
\begin{equation}\label{eq:broken-flux-residual}
J_K^{n+1}
:=
\widehat\lambda_h^{n+1}|_{\partial K}
-
\epsilon\nabla u_h^{n+1}\cdot n_K
\qquad\text{on }\partial K.
\end{equation}
Here $n_K$ denotes the outward unit normal to $K$, and $\widehat\lambda_h^{n+1}|_{\partial K}$ denotes the elementwise representative of the skeleton flux with the corresponding orientation. Indeed, elementwise integration by parts gives
\begin{equation}\label{eq:broken-residual-local-form}
\mathcal R_{{\rm br},h}^{n+1}
(\boldsymbol u_h^{n+1})(v)
=
\tau
\sum_{K\in\mathcal T_h}
\left[
(R_K^{n+1},v)_K
+
\langle J_K^{n+1},v\rangle_{\partial K}
\right],
\qquad
v\in V_{\rm br}.
\end{equation}
Assume, in addition, that $I_h^{\rm br}$ satisfies the local approximation estimates
$$
\|v-I_h^{\rm br}v\|_{0,K}
\lesssim
h_K\|\nabla_h v\|_{0,\omega_K},
$$
and
$$
\|v-I_h^{\rm br}v\|_{0,\partial K}
\lesssim
h_K^{1/2}\|v\|_{V_{\rm br},\omega_K},
$$
where $\omega_K$ is a uniformly bounded element patch. If the discrete flux admits an $L^2$ representative on each element boundary, the definition \eqref{eq:rho-broken-fe-def} then yields
\begin{equation}\label{eq:rho-broken-computable}
\left(\rho_{{\rm br},h}^{n+1}\right)^2
\lesssim
\tau
\sum_{K\in\mathcal T_h}
\left(
h_K^2\|R_K^{n+1}\|_{0,K}^2
+
h_K\|J_K^{n+1}\|_{0,\partial K}^2
\right).
\end{equation}

If, in addition, $I_h^{\rm br}$ preserves the local $L^2$ moments against $\mathbb P_k(K)$, namely,
$$
(q,v-I_h^{\rm br}v)_K=0,
\qquad
\forall q\in\mathbb P_k(K),\quad
\forall v\in V_{\rm br},\quad
\forall K\in\mathcal T_h,
$$
then the volume contribution can be replaced by the corresponding oscillation term. In particular,
$
h_K\|R_K^{n+1}\|_{0,K}
$
can be replaced by
$
h_K
\|(I-\pi_k^0)R_K^{n+1}\|_{0,K},
$
where $\pi_k^0$ denotes the elementwise $L^2$-projection onto
$\mathbb P_k(\mathcal T_h)$. If, moreover,
$\epsilon\nabla u_h^{n+1}\in H(\operatorname{div};\Omega)$ and
$\widehat\lambda_h^{n+1}$ is chosen as its physical diffusive normal trace, then the skeleton contribution vanishes.
\end{remark}
%
%

\section{Neural-network MinRes approximations}\label{sec:nn}

We now consider a neural-network trial class while retaining the finite-dimensional test spaces used to evaluate the residual. The analysis of Section~\ref{sec:ie-ideal} applies to any sequence of trial functions and therefore does not require the trial class to be a linear space. The only additional step is to relate the continuous residual representative entering the semidiscrete estimates to its computable discrete counterpart.

Let $\mathcal M_\Theta :=\{u_\theta:\Omega\times[0,T]\to\mathbb R:\theta\in\Theta\}$ denote the neural-network trial class, and set $u_\theta^n:=u_\theta(\cdot,t_n)$ for $n=0,\ldots,N+1$. We assume $u_\theta^n\in V$ at all time levels and define $e_\theta^n:=u^n-u_\theta^n$. We define the time-step residual functional $\mathcal R_\theta^{n+1}\in V'$ by
\begin{equation}\label{eq:nn-residual-functional}
\mathcal R_\theta^{n+1}(v)
:=
(u_\theta^n-u_\theta^{n+1},v)_0
+\tau(f^{n+1},v)_0
-\tau a(u_\theta^{n+1},v),
\qquad v\in V.
\end{equation}
Its continuous Riesz representative $\varepsilon_\theta^{n+1}\in V$ is defined by
\begin{equation}\label{eq:nn-continuous-riesz}
\tau(\varepsilon_\theta^{n+1},v)_V
=
\mathcal R_\theta^{n+1}(v),
\qquad \forall v\in V.
\end{equation}
Thus,
\begin{equation}\label{eq:nn-continuous-dual}
\sqrt{\tau}\,
\|\varepsilon_\theta^{n+1}\|_V
=
\|\mathcal R_\theta^{n+1}\|_{(\tau V)'},
\end{equation}
where $(\tau V)'$ denotes the dual norm induced by
$\sqrt{\tau}\|\cdot\|_V$.

For the computable residual decompositions below, we now specialize to the diffusion--advection--reaction realization \eqref{eq:dar-strong}. When $u_\theta^{n+1}$ has sufficient spatial regularity, we introduce the strong time-step residual
\begin{equation}\label{eq:nn-strong-residual}
r_\theta^{n+1}
:=
f^{n+1}
-\delta_tu_\theta^{n+1}
+\epsilon\Delta u_\theta^{n+1}
-\boldsymbol\beta\cdot\nabla u_\theta^{n+1}
-\gamma u_\theta^{n+1}.
\end{equation}
Whenever $\Delta u_\theta^{n+1}\in L^2(\Omega)$, the strong residual belongs to
$L^2(\Omega)$ and integration by parts gives
\begin{equation}\label{eq:nn-weak-strong-residual}
\mathcal R_\theta^{n+1}(v)
=
\tau(r_\theta^{n+1},v)_0,
\qquad \forall v\in V.
\end{equation}
We measure the error in the discrete-in-time energy quantity
\begin{equation}\label{eq:nn-energy-error}
\|e_\theta\|_{\mathcal E_\tau}^2
:=
\|e_\theta^{N+1}\|_0^2
+
\sum_{n=0}^{N}
\|e_\theta^{n+1}-e_\theta^n\|_0^2
+
\tau\alpha
\sum_{n=0}^{N}
\|e_\theta^{n+1}\|_V^2.
\end{equation}

\subsection{Bubble-enriched conforming test space}
\label{subsec:nn-bubble}

Let
$$
S_{h,0}^1
:=
\mathbb P_1(\mathcal T_h)\cap H_0^1(\Omega)
$$
and let $B_h$ denote the element-bubble space generated on each simplex $K\in\mathcal T_h$ by the standard bubble of degree $d+1$. We use the enriched conforming test space
\begin{equation}\label{eq:nn-bubble-test-space}
Y_h^{\rm bub}
:=
S_{h,0}^1+B_h
\subset V.
\end{equation}
For each time step, the discrete residual representative $\varepsilon_{\theta,h}^{n+1}\in Y_h^{\rm bub}$ is defined by
\begin{equation}\label{eq:nn-bubble-discrete-riesz}
\tau
(\varepsilon_{\theta,h}^{n+1},y_h)_V
=
\mathcal R_\theta^{n+1}(y_h),
\qquad
\forall y_h\in Y_h^{\rm bub},
\end{equation}
and we set
\begin{equation}\label{eq:nn-bubble-eta}
\eta_{\theta,h}^{n+1}
:=
\sqrt{\tau}
\|\varepsilon_{\theta,h}^{n+1}\|_V
=
\sup_{0\neq y_h\in Y_h^{\rm bub}}
\frac{
\mathcal R_\theta^{n+1}(y_h)
}{
\sqrt{\tau}\|y_h\|_V
}.
\end{equation}
To estimate the part of the residual not detected by $Y_h^{\rm bub}$, we use the moment-preserving bubble quasi-interpolant of
\cite[Lemma~6]{fuhrer2025posteriori}.
Taking $k=m=0$ in that result, there exists a linear operator $\Pi_h^{\rm bub}:V\to Y_h^{\rm bub}$ such that
\begin{align}
\|\Pi_h^{\rm bub}v\|_V
&\le C_\Pi\|v\|_V,
\label{eq:nn-bubble-proj-stability}\\
\|v-\Pi_h^{\rm bub}v\|_{0,K}
&\le C_{\rm app}h_K\|\nabla v\|_{0,\omega_K},
\label{eq:nn-bubble-proj-approx}\\
(1,v-\Pi_h^{\rm bub}v)_K
&=0,
\qquad K\in\mathcal T_h.
\label{eq:nn-bubble-proj-moment}
\end{align}
The constants depend only on the mesh's shape regularity. Let $\pi_0$ be the elementwise $L^2$-projection onto $\mathbb P_0(\mathcal T_h)$ and define
\begin{equation}\label{eq:nn-bubble-rho}
\rho_{\theta,h}^{n+1}
:=
\sqrt{\tau}\,
\bigl\|
h_{\mathcal T}(I-\pi_0)r_\theta^{n+1}
\bigr\|_0.
\end{equation}

\begin{lemma}[Discrete and continuous residuals]
\label{lem:nn-bubble-residual-comparison}
Let $\Pi_h^{\rm bub}$ be the operator described above. Then 
\begin{equation}\label{eq:nn-bubble-residual-comparison}
\eta_{\theta,h}^{n+1}
\le
\sqrt{\tau}\,
\|\varepsilon_\theta^{n+1}\|_V
\le
C_\Pi\eta_{\theta,h}^{n+1}
+
C_\rho\rho_{\theta,h}^{n+1},
\end{equation}
where $C_\rho$ depends only on the shape regularity of the mesh and the constants in \eqref{eq:nn-bubble-proj-approx}.
\end{lemma}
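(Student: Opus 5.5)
The plan follows the proof of Lemma~\ref{lem:eta-rho-standard}: split the residual functional with the quasi-interpolant $\Pi_h^{\rm bub}$, then localize the undetected part via the strong-residual representation \eqref{eq:nn-weak-strong-residual}.

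\textbf{First inequality.} Since $Y_h^{\rm bub}\subset V$ and the test norm on $Y_h^{\rm bub}$ is exactly $\|\cdot\|_V$, the supremum in \eqref{eq:nn-bubble-eta} runs over a subset of the supremum defining $\|\mathcal R_\theta^{n+1}\|_{(\tau V)'}$. Combined with \eqref{eq:nn-continuous-dual}, this gives $\eta_{\theta,h}^{n+1}\le\sqrt{\tau}\|\varepsilon_\theta^{n+1}\|_V$ with constant one.

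\textbf{Splitting.} For the second inequality, fix $v\in V$ and write
\[
\mathcal R_\theta^{n+1}(v)=\mathcal R_\theta^{n+1}(\Pi_h^{\rm bub}v)+\mathcal R_\theta^{n+1}(v-\Pi_h^{\rm bub}v).
\]
By \eqref{eq:nn-bubble-eta} and \eqref{eq:nn-bubble-proj-stability}, the first term is at most $\eta_{\theta,h}^{n+1}\sqrt{\tau}\|\Pi_h^{\rm bub}v\|_V\le C_\Pi\eta_{\theta,h}^{n+1}\sqrt{\tau}\|v\|_V$.

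\textbf{Localizing the second term.} Using \eqref{eq:nn-weak-strong-residual}, assuming $\Delta u_\theta^{n+1}\in L^2(\Omega)$ as in that identity, we have
\[
\mathcal R_\theta^{n+1}(v-\Pi_h^{\rm bub}v)=\tau\sum_{K}(r_\theta^{n+1},v-\Pi_h^{\rm bub}v)_K .
\]
The moment property \eqref{eq:nn-bubble-proj-moment} lets us subtract the elementwise constant $\pi_0 r_\theta^{n+1}$ at no cost, so each summand equals $((I-\pi_0)r_\theta^{n+1},v-\Pi_h^{\rm bub}v)_K$. Cauchy--Schwarz and \eqref{eq:nn-bubble-proj-approx} then bound it by $C_{\rm app}h_K\|(I-\pi_0)r_\theta^{n+1}\|_{0,K}\|\nabla v\|_{0,\omega_K}$. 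A discrete Cauchy--Schwarz over $K$, together with the finite-overlap property of the patches $\omega_K$ (a constant $C_{\rm ov}$ depending only on shape regularity, so that $\sum_K\|\nabla v\|_{0,\omega_K}^2\le C_{\rm ov}\|v\|_V^2$), gives
\[
\bigl|\mathcal R_\theta^{n+1}(v-\Pi_h^{\rm bub}v)\bigr|\le \tau\, C_{\rm app}C_{\rm ov}^{1/2}\,\|h_{\mathcal T}(I-\pi_0)r_\theta^{n+1}\|_0\,\|v\|_V .
\]
Dividing by $\sqrt\tau\|v\|_V$ turns this into $C_\rho\rho_{\theta,h}^{n+1}$ with $C_\rho=C_{\rm app}C_{\rm ov}^{1/2}$. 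Taking the supremum over $v$ and applying \eqref{eq:nn-continuous-dual} yields the claim.

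\textbf{Main obstacle.} The only delicate step is the localization. It needs the finite patch overlap to sum the local bounds, which holds by shape regularity. It also needs the moment orthogonality, because this is what allows the full residual $r_\theta^{n+1}$ to be replaced by its oscillation $(I-\pi_0)r_\theta^{n+1}$. Everything else is the same argument as in Lemma~\ref{lem:eta-rho-standard}.
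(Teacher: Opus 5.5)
Your proposal is correct and follows the paper's proof essentially step by step. You split the residual with $\Pi_h^{\rm bub}$, use the moment property to replace $r_\theta^{n+1}$ by $(I-\pi_0)r_\theta^{n+1}$, and then apply the approximation estimate with finite patch overlap; your explicit constant $C_\rho=C_{\rm app}C_{\rm ov}^{1/2}$ simply makes precise what the paper leaves implicit.
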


\begin{proof}
Since $Y_h^{\rm bub}\subset V$, restriction of the supremum in \eqref{eq:nn-continuous-dual} immediately gives
$$
\eta_{\theta,h}^{n+1}
\le
\sqrt{\tau}\|\varepsilon_\theta^{n+1}\|_V.
$$
For the converse estimate, let $v\in V$. We split
$$
\mathcal R_\theta^{n+1}(v)
=
\mathcal R_\theta^{n+1}(\Pi_h^{\rm bub}v)
+
\mathcal R_\theta^{n+1}(v-\Pi_h^{\rm bub}v).
$$
By \eqref{eq:nn-bubble-eta} and
\eqref{eq:nn-bubble-proj-stability},
$$
\big|
\mathcal R_\theta^{n+1}(\Pi_h^{\rm bub}v)
\big|
\le
C_\Pi
\eta_{\theta,h}^{n+1}
\sqrt{\tau}\|v\|_V.
$$
For the second term, using \eqref{eq:nn-weak-strong-residual} and \eqref{eq:nn-bubble-proj-moment},
$$
\mathcal R_\theta^{n+1}(v-\Pi_h^{\rm bub}v)
=
\tau
\big(
(I-\pi_0)r_\theta^{n+1},
v-\Pi_h^{\rm bub}v
\big)_0.
$$
Hence, by \eqref{eq:nn-bubble-proj-approx} and the finite overlap of the patches,
$$
\begin{aligned}
\big|
\mathcal R_\theta^{n+1}
(v-\Pi_h^{\rm bub}v)
\big|
&\le
C_\rho\tau
\big\|
h_{\mathcal T}(I-\pi_0)r_\theta^{n+1}
\big\|_0
\|v\|_V
\\
&=
C_\rho
\rho_{\theta,h}^{n+1}
\sqrt{\tau}\|v\|_V.
\end{aligned}
$$
Combining the two bounds, dividing by $\sqrt{\tau}\|v\|_V$, and taking the supremum over $v\in V\setminus\{0\}$ proves the second inequality.
\end{proof}

We now obtain the neural-network reliability estimate directly from the representation-independent stability result of Theorem~\ref{thm:standard-stability}.

\begin{theorem}[Reliability for the conforming neural-network formulation]
\label{thm:nn-bubble-reliability}
Let $\theta\in\Theta$ be such that
$
\Delta u_\theta^{n+1}\in L^2(\Omega)$, $n=0,\ldots,N$. Then
\begin{align}
\|e_\theta\|_{\mathcal E_\tau}^2
\le\;&
\|u_0-u_\theta^0\|_0^2
+
\frac{2}{\alpha}
\sum_{n=0}^{N}
\left(
C_\Pi\eta_{\theta,h}^{n+1}
+
C_\rho\rho_{\theta,h}^{n+1}
\right)^2
+
\frac{2\tau C_P^2}{\alpha}
\sum_{n=0}^{N}
\|\vartheta^{n+1}\|_0^2.
\label{eq:nn-bubble-reliability}
\end{align}
\end{theorem}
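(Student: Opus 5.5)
The plan is to apply Theorem~\ref{thm:standard-stability} to the particular trial sequence $u_\star^n:=u_\theta^n$, $n=0,\ldots,N+1$, and then to replace the continuous residual norms by computable quantities via Lemma~\ref{lem:nn-bubble-residual-comparison}. No new energy argument is needed, since Theorem~\ref{thm:standard-stability} was proved for an arbitrary sequence in $V$ and does not use linearity of the trial class.

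First, I will check that the semidiscrete Riesz representative $\varepsilon_\star^{n+1}$ of \eqref{eq:riesz-standard-def} for this sequence coincides with $\varepsilon_\theta^{n+1}$ from \eqref{eq:nn-continuous-riesz}. This is immediate: the right-hand side of \eqref{eq:riesz-standard-def} with $u_\star^n=u_\theta^n$ is exactly $\mathcal R_\theta^{n+1}(v)$ from \eqref{eq:nn-residual-functional}. The hypothesis $u_\theta^n\in V$ ensures the sequence is admissible. The initial error is $e_\theta^0=u^0-u_\theta^0=u_0-u_\theta^0$.

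With these identifications, the left-hand side of \eqref{eq:standard-stability} is precisely $\|e_\theta\|_{\mathcal E_\tau}^2$ by \eqref{eq:nn-energy-error}. This gives
\[
\|e_\theta\|_{\mathcal E_\tau}^2\le\|u_0-u_\theta^0\|_0^2+\frac{2}{\alpha}\sum_{n=0}^{N}\bigl(\sqrt{\tau}\|\varepsilon_\theta^{n+1}\|_V\bigr)^2+\frac{2\tau C_P^2}{\alpha}\sum_{n=0}^{N}\|\vartheta^{n+1}\|_0^2 .
\]
Here I have rewritten $\tau\|\varepsilon_\theta^{n+1}\|_V^2$ as $(\sqrt{\tau}\|\varepsilon_\theta^{n+1}\|_V)^2$.

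Finally, I will invoke the upper bound in \eqref{eq:nn-bubble-residual-comparison}, namely $\sqrt{\tau}\|\varepsilon_\theta^{n+1}\|_V\le C_\Pi\eta_{\theta,h}^{n+1}+C_\rho\rho_{\theta,h}^{n+1}$. Both sides are nonnegative, so squaring preserves the inequality. Summing over $n$ then yields \eqref{eq:nn-bubble-reliability}.

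The only delicate point is that Lemma~\ref{lem:nn-bubble-residual-comparison} relies on the identity \eqref{eq:nn-weak-strong-residual}, which holds only when $\Delta u_\theta^{n+1}\in L^2(\Omega)$; this is exactly the hypothesis of the theorem, imposed for each $n=0,\ldots,N$. Under it, $r_\theta^{n+1}\in L^2(\Omega)$, so $\rho_{\theta,h}^{n+1}$ is well defined and the lemma applies at every time level. Beyond this bookkeeping, I expect no real obstacle.
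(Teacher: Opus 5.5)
Your proposal is correct and follows the paper's proof: you apply Theorem~\ref{thm:standard-stability} to the sequence $u_\star^n=u_\theta^n$, identify $\varepsilon_\star^{n+1}$ with $\varepsilon_\theta^{n+1}$, and then bound $\sqrt{\tau}\|\varepsilon_\theta^{n+1}\|_V$ at each time level by the upper estimate of Lemma~\ref{lem:nn-bubble-residual-comparison}. Your added bookkeeping also matches what the paper's one-line proof uses implicitly, namely $e_\theta^0=u_0-u_\theta^0$ and the role of the hypothesis $\Delta u_\theta^{n+1}\in L^2(\Omega)$ in the lemma.
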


\begin{proof}
Apply Theorem~\ref{thm:standard-stability} to $u_\star^n=u_\theta^n$ and use Lemma~\ref{lem:nn-bubble-residual-comparison} at each time level.
\end{proof}

\subsection{Broken polynomial test space}
\label{subsec:nn-broken}

We next consider the broken polynomial test space
\begin{equation}\label{eq:nn-broken-test-space}
Y_{h,k}^{\rm br}
:=
\left\{
v_h\in V_{\rm br}:
v_h|_K\in\mathbb P_k(K)
\quad\forall K\in\mathcal T_h
\right\}.
\end{equation}
The index $k$ denotes the local polynomial degree; in particular, $Y_{h,0}^{\rm br}$ is the piecewise-constant test space used in the numerical experiments below. For the broken formulation, we additionally assume that
$$
\Delta u_\theta^n\in L^2(\Omega),
\qquad
n=0,\ldots,N+1,
$$
where the Laplacian is understood in the distributional sense. Since $\epsilon>0$ is constant and $u_\theta^n\in V$, this implies $\epsilon\nabla u_\theta^n\in H(\operatorname{div};\Omega)$. Hence the physical diffusive normal trace is well defined, and we set
\begin{equation}\label{eq:nn-flux-trace}
\widehat\lambda_\theta^n
:=
\operatorname{tr}_{\mathcal T_h}^{\operatorname{div}}
\bigl(\epsilon\nabla u_\theta^n\bigr)
\in\widehat\Lambda,
\qquad
\boldsymbol u_\theta^n
:=
(u_\theta^n,\widehat\lambda_\theta^n)\in U.
\end{equation}
The broken residual functional is
\begin{equation}\label{eq:nn-broken-residual-functional}
\mathcal R_{{\rm br},\theta}^{n+1}(v)
:=
(u_\theta^n-u_\theta^{n+1},v)_0
+\tau(f^{n+1},v)_0
-\tau
a_{\rm br}(\boldsymbol u_\theta^{n+1},v),
\qquad v\in V_{\rm br}.
\end{equation}
The continuous broken Riesz representative $\varepsilon_{{\rm br},\theta}^{n+1}\in V_{\rm br}$ is determined by
\begin{equation}\label{eq:nn-broken-continuous-riesz}
\tau
(\varepsilon_{{\rm br},\theta}^{n+1},v)_{V_{\rm br}}
=
\mathcal R_{{\rm br},\theta}^{n+1}(v),
\qquad
\forall v\in V_{\rm br}.
\end{equation}
Since the numerical flux in \eqref{eq:nn-flux-trace} is the physical diffusive normal trace, elementwise integration by parts yields
\begin{equation}\label{eq:nn-broken-strong-relation}
\mathcal R_{{\rm br},\theta}^{n+1}(v)
=
\tau(r_\theta^{n+1},v)_0,
\qquad
\forall v\in V_{\rm br},
\end{equation}
with $r_\theta^{n+1}$ defined by
\eqref{eq:nn-strong-residual}.

The discrete broken residual representative $\varepsilon_{{\rm br},\theta,h}^{n+1} \in Y_{h,k}^{\rm br}$ is defined by
\begin{equation}\label{eq:nn-broken-discrete-riesz}
\tau
(
\varepsilon_{{\rm br},\theta,h}^{n+1},
y_h
)_{V_{\rm br}}
=
\mathcal R_{{\rm br},\theta}^{n+1}(y_h),
\qquad
\forall y_h\in Y_{h,k}^{\rm br},
\end{equation}
and we set
\begin{equation}\label{eq:nn-broken-eta}
\eta_{{\rm br},\theta,h}^{n+1}
:=
\sqrt{\tau}
\|
\varepsilon_{{\rm br},\theta,h}^{n+1}
\|_{V_{\rm br}}.
\end{equation}
Let $\pi_k^0$ denote the elementwise $L^2$-projection onto $\mathbb P_k(\mathcal T_h)$ and define
\begin{equation}\label{eq:nn-broken-rho}
\rho_{{\rm br},\theta,h}^{n+1}
:=
\sqrt{\tau}\,
\big\|
h_{\mathcal T}
(I-\pi_k^0)
r_\theta^{n+1}
\big\|_0.
\end{equation}
For fixed $k\ge 0$, standard scaling arguments for the elementwise $L^2$-projection give
\begin{align}
\|\pi_k^0v\|_{V_{\rm br}}
&\le
C_{\Pi,{\rm br}}\|v\|_{V_{\rm br}},
\label{eq:nn-broken-proj-stab}\\
\|v-\pi_k^0v\|_{0,K}
&\le
C_{{\rm app},{\rm br}}
h_K\|\nabla v\|_{0,K},
\qquad
v\in H^1(K),
\label{eq:nn-broken-proj-approx}
\end{align}
where the constants are independent of $h$. In particular, these estimates also hold for $k=0$; in that case $\nabla_h(\pi_0^0v)=0$ on every element.
\begin{lemma}[Discrete and continuous broken residuals]
\label{lem:nn-broken-residual-comparison}
For the elementwise $L^2$-projection $\pi_k^0$ defined above,
\begin{equation}\label{eq:nn-broken-residual-comparison}
\eta_{{\rm br},\theta,h}^{n+1}
\le
\sqrt{\tau}
\|
\varepsilon_{{\rm br},\theta}^{n+1}
\|_{V_{\rm br}}
\le
C_{\Pi,{\rm br}}
\eta_{{\rm br},\theta,h}^{n+1}
+
C_{\rho,{\rm br}}
\rho_{{\rm br},\theta,h}^{n+1}.
\end{equation}
\end{lemma}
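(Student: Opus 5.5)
The argument mirrors the proof of Lemma~\ref{lem:nn-bubble-residual-comparison}, with $V$, $Y_h^{\rm bub}$ and $\Pi_h^{\rm bub}$ replaced by $V_{\rm br}$, $Y_{h,k}^{\rm br}$ and the elementwise projection $\pi_k^0$. We will exploit that $\pi_k^0$ is simultaneously a bounded map into the test space and orthogonal to $\mathbb P_k(\mathcal T_h)$.

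For the lower bound, observe from \eqref{eq:nn-broken-continuous-riesz} and \eqref{eq:nn-broken-discrete-riesz} that $\varepsilon_{{\rm br},\theta,h}^{n+1}$ is the $V_{\rm br}$-orthogonal projection of $\varepsilon_{{\rm br},\theta}^{n+1}$ onto $Y_{h,k}^{\rm br}$. Hence $\|\varepsilon_{{\rm br},\theta,h}^{n+1}\|_{V_{\rm br}}\le\|\varepsilon_{{\rm br},\theta}^{n+1}\|_{V_{\rm br}}$. Equivalently, $\eta_{{\rm br},\theta,h}^{n+1}$ is the supremum of $\mathcal R_{{\rm br},\theta}^{n+1}(y_h)/(\sqrt\tau\|y_h\|_{V_{\rm br}})$ over $Y_{h,k}^{\rm br}\subset V_{\rm br}$, which is at most the supremum over all of $V_{\rm br}$. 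The latter equals $\sqrt\tau\|\varepsilon_{{\rm br},\theta}^{n+1}\|_{V_{\rm br}}$. Multiplying by $\sqrt\tau$ gives the first inequality with constant one.

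For the upper bound, fix $v\in V_{\rm br}$ and split
\[
\mathcal R_{{\rm br},\theta}^{n+1}(v)=\mathcal R_{{\rm br},\theta}^{n+1}(\pi_k^0v)+\mathcal R_{{\rm br},\theta}^{n+1}(v-\pi_k^0v).
\]
Since $\pi_k^0v\in Y_{h,k}^{\rm br}$, the first term is bounded by $\eta_{{\rm br},\theta,h}^{n+1}\sqrt\tau\|\pi_k^0v\|_{V_{\rm br}}$. By \eqref{eq:nn-broken-proj-stab} this is at most $C_{\Pi,{\rm br}}\eta_{{\rm br},\theta,h}^{n+1}\sqrt\tau\|v\|_{V_{\rm br}}$.

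For the second term we use \eqref{eq:nn-broken-strong-relation}, which is valid on all of $V_{\rm br}$ because the flux is the physical normal trace; this is where the assumption $\Delta u_\theta^n\in L^2(\Omega)$ enters. It gives $\mathcal R_{{\rm br},\theta}^{n+1}(v-\pi_k^0v)=\tau(r_\theta^{n+1},v-\pi_k^0v)_0$. The $L^2$-orthogonality of $v-\pi_k^0v$ to $\mathbb P_k(\mathcal T_h)$ on each element then lets us replace $r_\theta^{n+1}$ by $(I-\pi_k^0)r_\theta^{n+1}$. Applying Cauchy--Schwarz elementwise together with \eqref{eq:nn-broken-proj-approx} on each $K$ (no patches are needed, since the estimate is purely local) yields
\[
\tau\sum_K\|(I-\pi_k^0)r_\theta^{n+1}\|_{0,K}\,C_{{\rm app},{\rm br}}h_K\|\nabla v\|_{0,K}
\le C_{{\rm app},{\rm br}}\tau\|h_{\mathcal T}(I-\pi_k^0)r_\theta^{n+1}\|_0\|\nabla_hv\|_0 .
\]
Using $\|\nabla_hv\|_0\le\|v\|_{V_{\rm br}}$, this is at most $C_{\rho,{\rm br}}\rho_{{\rm br},\theta,h}^{n+1}\sqrt\tau\|v\|_{V_{\rm br}}$ with $C_{\rho,{\rm br}}=C_{{\rm app},{\rm br}}$.

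Adding the two bounds, dividing by $\sqrt\tau\|v\|_{V_{\rm br}}$ and taking the supremum over $v\neq0$ gives the second inequality. The only delicate point is justifying \eqref{eq:nn-broken-strong-relation} for genuinely broken $v$, namely that the skeleton term cancels exactly the boundary terms from elementwise integration by parts. This follows from the definition \eqref{eq:broken-trace-operator} applied with $\boldsymbol q=\epsilon\nabla u_\theta^{n+1}$, and we take it as given from the text preceding the lemma.
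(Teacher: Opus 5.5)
Your proof is correct and follows the same route as the paper. The lower bound comes from restricting the supremum, equivalently from $\varepsilon_{{\rm br},\theta,h}^{n+1}$ being the $V_{\rm br}$-orthogonal projection of $\varepsilon_{{\rm br},\theta}^{n+1}$; the upper bound splits with $\pi_k^0$, applies \eqref{eq:nn-broken-proj-stab} to the detected part, and uses $L^2$-orthogonality together with \eqref{eq:nn-broken-proj-approx} on the complementary part, where you simply fill in the details the paper leaves implicit, such as the explicit constant $C_{\rho,{\rm br}}=C_{{\rm app},{\rm br}}$.
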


\begin{proof}
The argument is the broken analog of Lemma~\ref{lem:nn-bubble-residual-comparison}. Split the residual using the elementwise projection $\pi_k^0$, use \eqref{eq:nn-broken-proj-stab} for the component detected by the discrete test space, and use the $L^2$-orthogonality of $\pi_k^0$ together with \eqref{eq:nn-broken-proj-approx} for the complementary component.
\end{proof}
\begin{remark}[Piecewise-constant test functions]
For $k=0$, the discrete test functions are constant on each element, and therefore
$$
\|y_h\|_{V_{\rm br}}
=
\|y_h\|_0,
\qquad
y_h\in Y_{h,0}^{\rm br}.
$$
Thus $\eta_{{\rm br},\theta,h}^{n+1}$ measures the component of the strong residual detected by its elementwise averages, while $\rho_{{\rm br},\theta,h}^{n+1}$ provides a mesh-weighted measure of the complementary intra-element component. The residual comparison of Lemma~\ref{lem:nn-broken-residual-comparison} therefore remains valid for the degree-zero test space used in the numerical experiments.
\end{remark}
\begin{theorem}[Reliability for the broken neural-network formulation]
\label{thm:nn-broken-reliability}
Let $\theta\in\Theta$ satisfy
$
\Delta u_\theta^n\in L^2(\Omega)$, $n=0,\ldots,N+1$.
Then
\begin{align}
\|e_\theta\|_{\mathcal E_\tau}^2
\le\;&
\|u_0-u_\theta^0\|_0^2
+
\frac{2C_{\rm emb}^2}{\alpha}
\sum_{n=0}^{N}
\left(
C_{\Pi,{\rm br}}
\eta_{{\rm br},\theta,h}^{n+1}
+
C_{\rho,{\rm br}}
\rho_{{\rm br},\theta,h}^{n+1}
\right)^2
+
\frac{2\tau C_P^2}{\alpha}
\sum_{n=0}^{N}
\|\vartheta^{n+1}\|_0^2.
\label{eq:nn-broken-reliability}
\end{align}
\end{theorem}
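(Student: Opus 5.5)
The plan is to apply the representation-independent broken stability estimate, Theorem~\ref{thm:broken-stability}, to the neural trial sequence $\boldsymbol u_\star^n:=\boldsymbol u_\theta^n=(u_\theta^n,\widehat\lambda_\theta^n)$, and then to bound each continuous residual by its computable parts via Lemma~\ref{lem:nn-broken-residual-comparison}.

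\textbf{Checking the hypotheses.} First I verify that this sequence is admissible for Theorem~\ref{thm:broken-stability}. The assumption $\Delta u_\theta^n\in L^2(\Omega)$ for $n=0,\ldots,N+1$, together with $u_\theta^n\in V$, gives $\epsilon\nabla u_\theta^n\in H(\operatorname{div};\Omega)$. Hence \eqref{eq:nn-flux-trace} defines $\widehat\lambda_\theta^n\in\widehat\Lambda$, and so $\boldsymbol u_\theta^n\in U$ at every level.

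\textbf{Identifying the representative.} Comparing \eqref{eq:riesz-broken-def} with \eqref{eq:nn-broken-residual-functional}--\eqref{eq:nn-broken-continuous-riesz} shows that the semidiscrete broken representative $\varepsilon_{{\rm br},\star}^{n+1}$ for this sequence is exactly $\varepsilon_{{\rm br},\theta}^{n+1}$. The proof of Theorem~\ref{thm:broken-stability} uses only three ingredients:
\begin{itemize}
\item the error equation \eqref{eq:broken-error-eq}, which follows from the exact broken time-discrete identity and the definition of the representative, independently of how the trial sequence was produced;
\item field coercivity \eqref{eq:broken-field-coercivity}, which applies because $e_\theta^{n+1}\in V$;
\item the embedding \eqref{eq:br-emb}.
\end{itemize}
Testing with $e_\theta^{n+1}\in V\subset V_{\rm br}$ is therefore legitimate, and the theorem yields
\[
\|e_\theta\|_{\mathcal E_\tau}^2
\le
\|e_\theta^0\|_0^2
+
\frac{2C_{\rm emb}^2}{\alpha}\sum_{n=0}^{N}\tau\|\varepsilon_{{\rm br},\theta}^{n+1}\|_{V_{\rm br}}^2
+
\frac{2\tau C_P^2}{\alpha}\sum_{n=0}^{N}\|\vartheta^{n+1}\|_0^2 .
\]
Here $e_\theta^0=u_0-u_\theta^0$, since $u^0=u(0)=u_0$.

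\textbf{Replacing the continuous residual.} At each level I invoke the upper bound of Lemma~\ref{lem:nn-broken-residual-comparison}:
\[
\sqrt{\tau}\,\|\varepsilon_{{\rm br},\theta}^{n+1}\|_{V_{\rm br}}
\le
C_{\Pi,{\rm br}}\eta_{{\rm br},\theta,h}^{n+1}
+
C_{\rho,{\rm br}}\rho_{{\rm br},\theta,h}^{n+1}.
\]
Squaring this (both sides are nonnegative) and substituting it into the sum gives \eqref{eq:nn-broken-reliability}.

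\textbf{Main obstacle.} The only delicate point is the validity of Lemma~\ref{lem:nn-broken-residual-comparison} in this setting. That lemma relies on the identity \eqref{eq:nn-broken-strong-relation}, $\mathcal R_{{\rm br},\theta}^{n+1}(v)=\tau(r_\theta^{n+1},v)_0$ for all $v\in V_{\rm br}$. This identity holds because the skeleton variable is the physical trace: the elementwise integration-by-parts boundary terms cancel exactly against $\langle\widehat\lambda_\theta^{n+1},v\rangle_{\Gamma_h}$ through definition \eqref{eq:broken-trace-operator}. It also requires $r_\theta^{n+1}\in L^2(\Omega)$, and that is precisely what the assumption $\Delta u_\theta^{n+1}\in L^2(\Omega)$ provides. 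Once this is in place, the $L^2$-orthogonality of $\pi_k^0$ and \eqref{eq:nn-broken-proj-approx} control the complementary component $\mathcal R_{{\rm br},\theta}^{n+1}(v-\pi_k^0v)$, and the rest of the argument is routine.
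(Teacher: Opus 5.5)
Your proposal is correct and follows the paper's proof. You apply Theorem~\ref{thm:broken-stability} to $\boldsymbol u_\star^n=\boldsymbol u_\theta^n$, with the physical diffusive trace as skeleton variable so that $\varepsilon_{{\rm br},\star}^{n+1}=\varepsilon_{{\rm br},\theta}^{n+1}$, and then bound $\sqrt{\tau}\|\varepsilon_{{\rm br},\theta}^{n+1}\|_{V_{\rm br}}$ at each level by Lemma~\ref{lem:nn-broken-residual-comparison} and square; your extra checks ($\boldsymbol u_\theta^n\in U$, $e_\theta^0=u_0-u_\theta^0$, and the trace cancellation behind \eqref{eq:nn-broken-strong-relation}) only make explicit what the paper's two-line proof leaves implicit.
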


\begin{proof}
Apply Theorem~\ref{thm:broken-stability} to $\boldsymbol u_\star^n=\boldsymbol u_\theta^n$ and use Lemma~\ref{lem:nn-broken-residual-comparison}.
\end{proof}
\begin{remark}[Training loss and augmented error estimator]
\label{rem:nn-training-loss}
For the broken formulation, it is useful to distinguish the quantity used as the discrete MinRes training loss from the augmented estimator used for error control. We define
\begin{equation}\label{eq:nn-training-loss}
J_{\eta}(\theta)
:=
\|u_0-u_\theta^0\|_0^2
+
\sum_{n=0}^{N}
\left(
\eta_{{\rm br},\theta,h}^{n+1}
\right)^2,
\end{equation}
and
\begin{equation}\label{eq:nn-augmented-estimator}
J_{\rm aug}(\theta)
:=
\|u_0-u_\theta^0\|_0^2
+
\sum_{n=0}^{N}
\left[
\left(
\eta_{{\rm br},\theta,h}^{n+1}
\right)^2
+
\left(
\rho_{{\rm br},\theta,h}^{n+1}
\right)^2
\right].
\end{equation}

The quantity $J_\eta$ is the accumulated discrete minimum-residual objective, whereas $J_{\rm aug}$ additionally accounts for the component of the continuous residual that is not detected by the finite-dimensional test space. Indeed, using the inequality
$
(a+b)^2\le 2a^2+2b^2
$
in Theorem~\ref{thm:nn-broken-reliability}, there exists a constant $C_{\rm rel}>0$, independent of $h$ and $\tau$, such that
\begin{equation}\label{eq:nn-augmented-reliability}
\|e_\theta\|_{\mathcal E_\tau}^2
\le
C_{\rm rel}J_{\rm aug}(\theta)
+
\frac{2\tau C_P^2}{\alpha}
\sum_{n=0}^{N}
\|\vartheta^{n+1}\|_0^2.
\end{equation}
Thus, $J_{\rm aug}$ provides a reliable error estimator up to the time-discretization defect. In contrast, the analysis does not in general provide the same reliability statement for $J_\eta$ alone.

The estimate holds for every admissible $\theta\in\Theta$ and does not require existence or uniqueness of a global minimizer of the nonconvex neural-network optimization problem.
\end{remark}
%
\section{Numerical experiments}\label{sec:numerics}

We consider three numerical experiments. The first verifies the finite-element discretization for a smooth heat problem. The second examines test-space resolution for a global space--time neural approximation. The third studies residual-driven spatial refinement for a transient diffusion--advection--reaction problem.

For the first and third experiments, we use legacy FEniCS/DOLFIN~\cite{logg2010dolfin}, which supports the restricted Raviart--Thomas facet variable used by the primal DPG formulation. The neural-network experiment uses PyTorch~\cite{paszke2019pytorch} together with FEniCSx/DOLFINx~\cite{baratta2023dolfinx}; Basix~\cite{scroggs2022basix} provides the spatial quadrature rules, while PyTorch automatic differentiation evaluates the network and its spatial derivatives.

\subsection{Smooth heat equation: finite-element verification}
\label{subsec:num-heat-fe}

We first consider the heat-equation instance of \eqref{eq:dar-strong}, obtained by taking $\epsilon=1$, $\boldsymbol\beta=\boldsymbol0$, and $\gamma=0$, on $\Omega=(0,1)^2$ with final time $T=0.1$. To avoid special behavior associated with a single Laplace eigenmode, we use the two-mode manufactured solution
$$
u(t,x,y)
=
e^{-t}\sin(\pi x)\sin(\pi y)
+
\frac12 e^{-2t}\sin(2\pi x)\sin(\pi y).
$$
The forcing term and initial condition are chosen consistently from this exact solution, which satisfies the homogeneous Dirichlet boundary condition.

For the broken-test MinRes discretization, the field variable is approximated by continuous piecewise-linear functions, the residual representative by discontinuous piecewise quadratics, and the skeleton variable by the normal-trace space induced by the lowest-order Raviart--Thomas space. In two dimensions, these polynomial degrees correspond to the $p=0$ member of the standard primal DPG family in~\cite[Section~2.2.1]{fuhrerHeuerKarkulik2021}, for which a piecewise-linear field and lowest-order skeleton trace are paired with piecewise-quadratic broken tests; the test inner product used here is, however, the steady broken $H^1$ inner product introduced above rather than the time-step-dependent norm of that reference. We therefore compare the field component directly with the classical Backward Euler finite element approximation obtained with the same conforming piecewise-linear space. The discrete initial condition is the $L^2$-projection of $u_0$ onto this field space.

We partition the unit square using an $n_x\times n_x$ structured grid, with each square subdivided into triangles, for $n_x\in\{4,8,12,16,24,32\}$. Since $h\simeq n_x^{-1}$, we choose $\tau_{\rm target}=(20n_x^2)^{-1}$, corresponding to $\tau=O(h^2)$, and set $N_T=\lceil T/\tau_{\rm target}\rceil$ and $\tau=T/N_T$ so that the final time is reached exactly. This scaling prevents the first-order Backward Euler consistency error from dominating the second-order spatial $L^2$-error. Quadrature of degree $10$ is used for the reported error and residual quantities.

For either approximation, we measure the discrete-in-time field error by
\begin{equation}\label{eq:num-full-error}
\begin{aligned}
E_h^2
:=\;&
\|e^{N_T}\|_0^2
+
\sum_{n=0}^{N_T-1}\|e^{n+1}-e^n\|_0^2
+
\tau\sum_{n=0}^{N_T-1}\|e^{n+1}\|_V^2.
\end{aligned}
\end{equation}
For the broken-test MinRes approximation, we additionally monitor the accumulated discrete residual representative
$$
\eta_h^2
:=
\|u_0-u_h^0\|_0^2
+
\tau\sum_{n=0}^{N_T-1}
\|\varepsilon_{{\rm br},h}^{n+1}\|_{V_{\rm br}}^2.
$$
In this numerical example, we use $\eta_h$ as a diagnostic of the discrete MinRes realization and do not include the complementary residual. We examine the role of the complementary residual when the discrete test space does not sufficiently resolve the continuous residual in the neural-network and adaptive experiments below.

Figure~\ref{fig:num-heat-fe} shows the convergence results. The full discrete-in-time errors of both the broken-test MinRes and standard finite element approximations converge with first order, as does $\eta_h$. The final-time $L^2(\Omega)$ errors converge with second order. On the last refinement, from $n_x=24$ to $n_x=32$, the observed rates are approximately $1.01$, $1.00$, and $1.01$ for the MinRes full error, the finite element full error, and $\eta_h$, respectively, while the corresponding final-time $L^2$ rates are approximately $2.00$ and $1.99$.

The residual representative also tracks the MinRes error closely: the ratio $\eta_h/E_h$ increases only from approximately $1.14$ on the coarsest mesh to approximately $1.20$ on the finest mesh. Thus, for this smooth finite-dimensional realization, the discrete residual representative provides a sharp numerical indicator of the full discrete-in-time error.

\begin{figure}[htbp]
  \centering
  \begin{subfigure}[t]{0.48\textwidth}
    \centering
    \includegraphics[width=\linewidth]
    {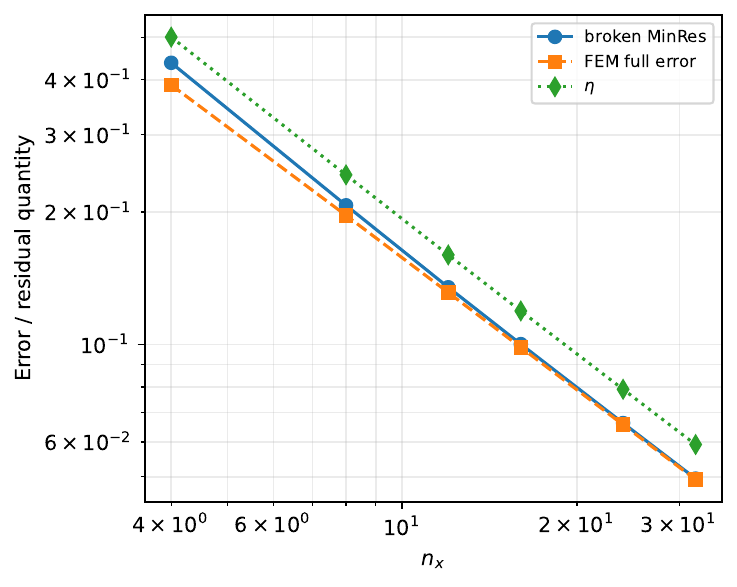}
    \caption{Full discrete-in-time error and residual.}
  \end{subfigure}
  \hfill
  \begin{subfigure}[t]{0.48\textwidth}
    \centering
    \includegraphics[width=\linewidth]
    {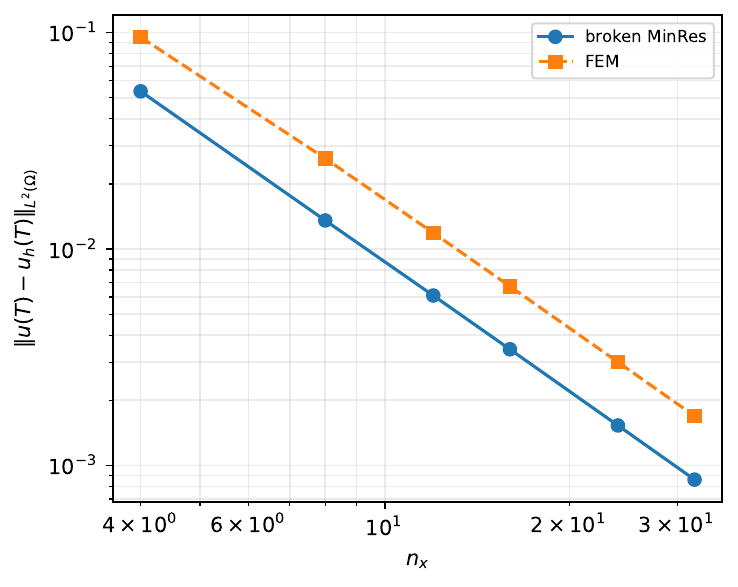}
    \caption{Final-time $L^2(\Omega)$ error.}
  \end{subfigure}
  \caption{Smooth heat equation with
  $\tau\simeq(20n_x^2)^{-1}$.}
  \label{fig:num-heat-fe}
\end{figure}

\subsection{Smooth heat equation: global space--time neural approximation}
\label{subsec:num-heat-nn}

We next consider the same manufactured heat problem as in Section~\ref{subsec:num-heat-fe}, but replace the finite-dimensional field space by a global space--time neural-network trial class. The aim of this experiment is to examine how a finite-dimensional test space resolves the residual of a neural approximation. We first train a single network using the augmented objective and then freeze both the network parameters and the time partition while refining only the spatial test space. This separates the effect of test-space refinement from neural-network optimization and time refinement. The diagnostic is motivated by the complementary-residual analysis in~\cite{fuhrer2025posteriori} and by adaptive test-space enrichment for neural approximations in~\cite{udomworarat2026neural}, but here the trained approximation is kept fixed during the refinement study.

A single network $u_\theta:\Omega\times[0,T]\to\mathbb R$ represents the approximation at all time levels. We use a fully connected network with five hidden layers of width $40$, hyperbolic-tangent activation, and a linear output layer. Homogeneous Dirichlet conditions are imposed strongly by setting
$$
u_\theta(x,y,t)
=
x(1-x)y(1-y)\,N_\theta(x,y,t).
$$
The resulting network contains $6761$ trainable parameters. All computations are performed in double precision. The smooth $\tanh$ activation is consistent with the regularity required in Section~\ref{sec:nn}, since the strong residual contains $\Delta u_\theta$, which is evaluated by automatic differentiation.

We use the broken piecewise-constant test space $Y_{h,0}^{\rm br}$. For this choice, the discrete Riesz representative is determined by the cell averages of the strong time-step residual, whereas the complementary contribution measures the mesh-weighted part of the residual not represented by the discrete test space; see Section~\ref{subsec:nn-broken}. We train with the augmented objective $J_{\rm aug}$ introduced in Remark~\ref{rem:nn-training-loss}, rather than with $J_\eta$ alone.

To obtain the neural approximation used below, we use a coarse-to-fine continuation over $n_x\in\{4,6,8\}$. At each stage we set $\tau_{\rm target}=(20n_x^2)^{-1}$, $N_T=\lceil T/\tau_{\rm target}\rceil$, and $\tau=T/N_T$. Although $u_\theta$ is a global space--time approximation, the loss is assembled only from spatial quadrature at the discrete Backward Euler time levels. Consequently, no quadrature over the full space--time cylinder $\Omega\times(0,T)$ is required.

We initially evaluate the spatial integrals in the training objective with a degree-$7$ quadrature rule. A richer degree-$10$ rule is used to monitor the same quantities; we append the superscript ``ref'' to quantities evaluated with this richer rule. This additional quadrature check is motivated by the sensitivity of variational neural methods to numerical integration; see, e.g.,~\cite{berrone2022quadratures}. If the relative discrepancy between the training and reference evaluations of $J_{\rm aug}$ exceeds $5\%$, the degree-$10$ rule is promoted to the training rule and a degree-$14$ rule is used as the new reference rule. This safeguard is activated on the coarsest test mesh. For the optimization-history diagnostics below, we write
$$
(\rho^{\rm ref})^2
:=
\sum_{n=0}^{N_T-1}
\left(
\rho_{{\rm br},\theta,h}^{n+1,{\rm ref}}
\right)^2,
$$
and use $J_\eta^{\rm ref}$ and $J_{\rm aug}^{\rm ref}$ for the corresponding reference-quadrature evaluations of the quantities in Remark~\ref{rem:nn-training-loss}.

On the $n_x=4$ mesh, the network is initialized with a fixed Xavier-uniform initialization and trained for $2500$ full-batch Adam iterations with learning rate $10^{-3}$. We then switch to full-batch L-BFGS with a strong-Wolfe line search. The quasi-Newton stage is applied in blocks of $20$ iterations and continued until the relative reduction of $J_{\rm aug}^{\rm ref}$ is below $5\times10^{-3}$ for three successive blocks. On the finer training meshes, the optimized parameters from one stage are used to initialize the next stage, followed by L-BFGS with the same stopping criterion. No interpolation of the neural trial function is required because the trial representation is independent of the discrete test space.

The final continuation stage has $n_x=8$, $N_T=128$, and $\tau=T/128$. From this point onward, no further optimization is performed. We freeze the trained network and this time partition and evaluate the same time-discrete strong residual on the uniformly refined sequence of piecewise-constant test spaces associated with
$$
n_x=4,\;8,\;16,\;32.
$$
All quantities in this post-processing study are evaluated using degree-$14$ quadrature. We retain the notation $u_\theta$ for this fixed trained network.

Since only the broken formulation is considered in the remainder of this experiment, we suppress the subscript ${\rm br}$ in the accumulated quantities. We also separate the residual contribution from the initial-condition contribution contained in $J_\eta$ and $J_{\rm aug}$. Define the accumulated strong-residual norm
$$
\mathsf R_{\theta,\tau}^2
:=
\tau\sum_{n=0}^{N_T-1}
\|r_\theta^{n+1}\|_0^2,
$$
and the accumulated component detected by the piecewise-constant test space
$$
\eta_{\theta,h}^2
:=
\sum_{n=0}^{N_T-1}
\left(
\eta_{{\rm br},\theta,h}^{n+1}
\right)^2
=
\tau\sum_{n=0}^{N_T-1}
\|\pi_0^0r_\theta^{n+1}\|_0^2.
$$
For this experiment, we additionally introduce the unweighted unresolved strong-residual component
$$
\zeta_{\theta,h}^2
:=
\tau\sum_{n=0}^{N_T-1}
\|(I-\pi_0^0)r_\theta^{n+1}\|_0^2.
$$
Thus, $\zeta_{\theta,h}$ measures the intra-element part of the strong residual that is not represented by the piecewise-constant test space. By the $L^2$-orthogonality of $\pi_0^0$,
$$
\mathsf R_{\theta,\tau}^2
=
\eta_{\theta,h}^2+\zeta_{\theta,h}^2.
$$
In contrast, the accumulated complementary contribution appearing in the reliability estimate is
$$
\rho_{\theta,h}^2
:=
\sum_{n=0}^{N_T-1}
\left(
\rho_{{\rm br},\theta,h}^{n+1}
\right)^2
=
\tau\sum_{n=0}^{N_T-1}
\left\|
h_{\mathcal T}(I-\pi_0^0)r_\theta^{n+1}
\right\|_0^2.
$$
Hence, $\zeta_{\theta,h}$ is an unweighted diagnostic of unresolved strong-residual content, whereas $\rho_{\theta,h}$ is the mesh-weighted complementary contribution entering the reliability estimate.

Figure~\ref{fig:num-heat-nn-residual} shows the test-space-refinement study. Since the neural approximation and time partition are fixed, $\mathsf R_{\theta,\tau}$ is independent of the test space up to quadrature error. Numerically, it remains essentially constant at $4.10\times10^{-3}$, with relative variation below $5\times10^{-5}$ over the four meshes. The resolved fraction $\eta_{\theta,h}^2/\mathsf R_{\theta,\tau}^2$ is approximately $0.12$, $0.43$, $0.73$, and $0.89$ for $n_x=4,8,16,32$, respectively, while the unresolved fraction $\zeta_{\theta,h}^2/\mathsf R_{\theta,\tau}^2$ decreases correspondingly from approximately $0.88$ to $0.11$. Thus, refinement of the test space detects an increasingly large portion of the same fixed strong residual.

The weighted complementary contribution decreases even more rapidly: $\rho_{\theta,h}/\eta_{\theta,h}$ is approximately $0.96$, $0.21$, $0.054$, and $0.015$ for $n_x=4,8,16,32$, respectively. Since $\rho_{\theta,h}$ contains the factor $h_{\mathcal T}$, its decrease alone would not demonstrate improved residual resolution. The simultaneous decrease of the unweighted unresolved fraction shows that test-space refinement reduces the portion of the fixed residual that is invisible to the piecewise-constant test space.

\begin{figure}[htbp]
  \centering
  \begin{subfigure}[t]{0.48\textwidth}
    \centering
    \includegraphics[width=\linewidth]
    {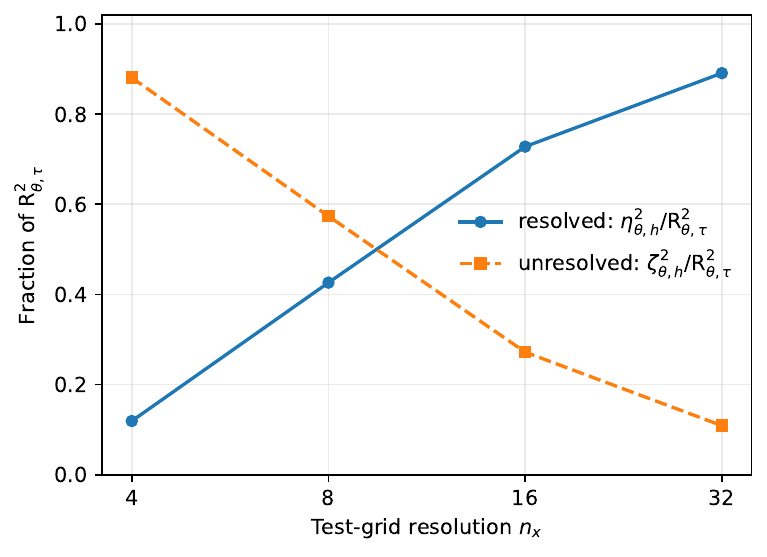}
    \caption{Resolved and unresolved fractions of the accumulated strong residual.}
  \end{subfigure}
  \hfill
  \begin{subfigure}[t]{0.48\textwidth}
    \centering
    \includegraphics[width=\linewidth]
    {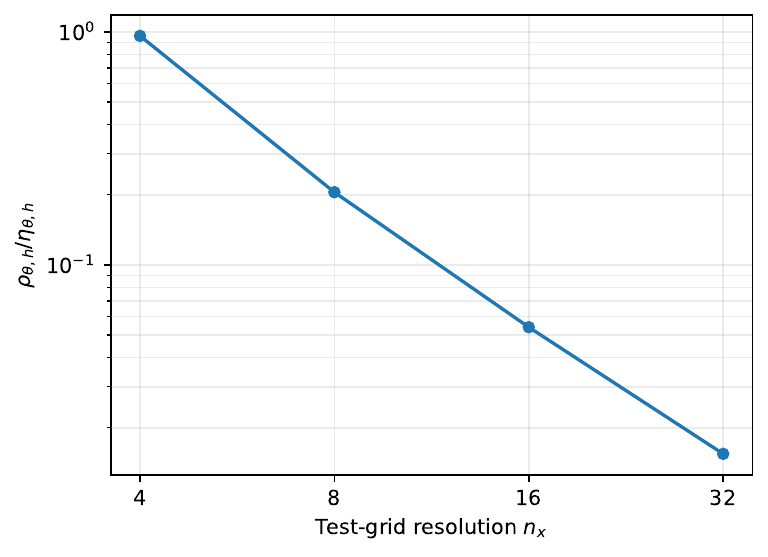}
    \caption{Weighted complementary contribution relative to the detected residual.}
  \end{subfigure}
  \caption{Test-space refinement for a fixed global space--time neural approximation. After training, the network parameters and the time partition ($N_T=128$) are frozen, and the same time-discrete strong residual is evaluated on the uniformly refined sequence of piecewise-constant test spaces associated with $n_x=4,8,16,32$. Panel (a) shows the resolved and unresolved fractions of the accumulated strong-residual squared norm. Panel (b) shows the ratio of the weighted complementary contribution $\rho_{\theta,h}$ to the accumulated detected residual contribution $\eta_{\theta,h}$. We use degree-$14$ quadrature throughout this post-processing study.}
  \label{fig:num-heat-nn-residual}
\end{figure}

Figure~\ref{fig:num-heat-nn-training} shows the optimization history on the initial $n_x=4$ training stage. Adam first reduces the augmented objective, and the subsequent L-BFGS stage further substantially decreases the residual quantities and the true error. During the optimization, $\rho^{\rm ref}/\sqrt{J_\eta^{\rm ref}}$ increases and eventually exceeds one. On this coarse test mesh, the complementary contribution therefore becomes comparable with the unaugmented quantity $\sqrt{J_\eta^{\rm ref}}$ even after the augmented objective has been substantially reduced.

\begin{figure}[htbp]
  \centering
  \begin{subfigure}[t]{0.48\textwidth}
    \centering
    \includegraphics[width=\linewidth]
    {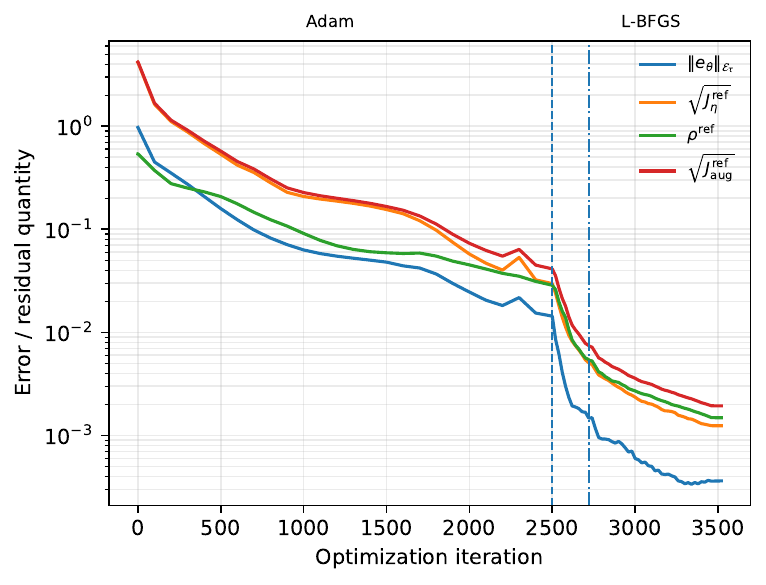}
    \caption{Optimization history.}
  \end{subfigure}
  \hfill
  \begin{subfigure}[t]{0.48\textwidth}
    \centering
    \includegraphics[width=\linewidth]
    {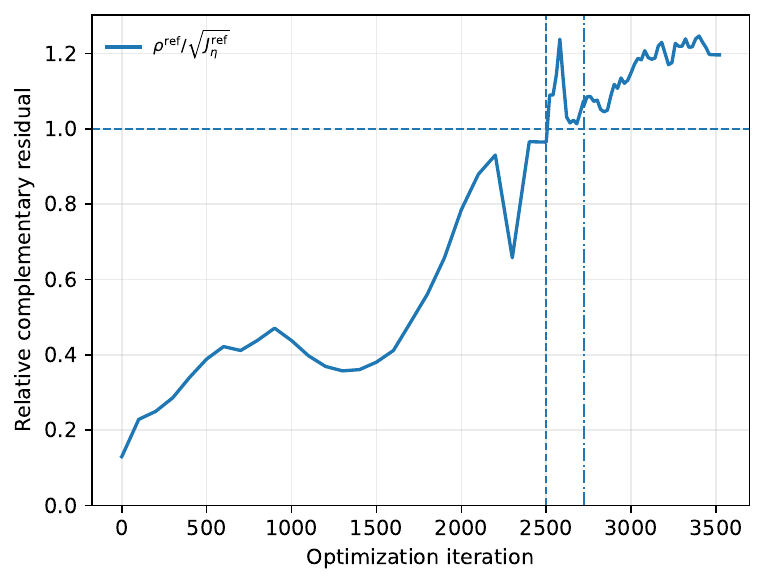}
    \caption{Relative complementary residual.}
  \end{subfigure}
  \caption{Optimization on the $n_x=4$ test mesh.
  Dashed/dash-dotted vertical lines mark the Adam--L-BFGS transition and the quadrature update, respectively. In the right panel, the horizontal line corresponds to $\rho^{\rm ref}=\sqrt{J_\eta^{\rm ref}}$.}
  \label{fig:num-heat-nn-training}
\end{figure}

\FloatBarrier

\subsection{Diffusion--advection--reaction problem:
estimator-driven spatial marking}
\label{subsec:num-dar}

The final experiment examines whether local residual contributions can drive spatial refinement for a transient nonsymmetric problem. We use the diffusion--advection--reaction realization \eqref{eq:dar-strong} on $\Omega=(0,1)^2$ with $\epsilon=10^{-2}$, $\boldsymbol\beta=(1,0)^\top$, $\gamma=1$, and $T=0.2$. Since $\nabla\cdot\boldsymbol\beta=0$ and $\gamma>0$, the coercivity condition \eqref{eq:dar-coercivity-assumption} is satisfied. We emphasize that this experiment concerns these fixed coefficients; we do not claim parameter robustness as $\epsilon\to0$.

To obtain a problem for which spatial localization changes throughout the time interval, we use a variant of the moving-peak manufactured solutions employed in adaptive computations for parabolic problems; see, e.g.,~\cite{stevenson2022wavelet}. Specifically, we take
$$
u(t,x,y)
=
x(1-x)y(1-y)
\exp\!\left(
-\kappa\left[
(x-x_c(t))^2+\left(y-\frac12\right)^2
\right]
\right),
$$
where $x_c(t)=0.2+0.6\,t/T$ and $\kappa=80$. Thus, the localized feature travels horizontally from $(0.2,0.5)$ to $(0.8,0.5)$. The forcing term in \eqref{eq:dar-strong} is obtained by substituting the manufactured solution into the differential equation.

The field variable is approximated by continuous piecewise-linear functions and the broken residual representative by discontinuous piecewise quadratics. The skeleton-flux unknown is represented by the normal trace of a restricted lowest-order Raviart--Thomas field $\boldsymbol\sigma_h^n$, so that, elementwise,
$$
\widehat\lambda_h^n|_{\partial K}
=
\boldsymbol\sigma_h^n\cdot n_K.
$$
We project the initial condition onto the conforming field space. All reported residual and error quantities are evaluated using quadrature of degree $10$.

We use a fixed Backward Euler partition with $N_T=200$ and $\tau=10^{-3}$ on every spatial mesh. The refinement is therefore purely spatial. More precisely, each adaptive cycle uses a \emph{single spatial mesh} for the complete interval $[0,T]$. We first solve the full transient problem on that mesh, accumulate local residual contributions over all time steps, and then mark and refine cells. We then recompute the full time-dependent problem from $t=0$ on the refined mesh. Thus we do not use a different spatial mesh on each time slab, and no inter-mesh transfer of intermediate-time solutions is required.

For an element $K\in\mathcal T_h$, we accumulate the discrete residual contribution
$$
\eta_K^2
:=
\|u_0-u_h^0\|_{0,K}^2
+
\tau\sum_{n=0}^{N_T-1}
\|\varepsilon_{{\rm br},h}^{n+1}\|_{V_{\rm br}(K)}^2.
$$
The complementary contribution is constructed from the element strong residual
$$
R_K^{n+1}
=
f^{n+1}
-\frac{u_h^{n+1}-u_h^n}{\tau}
+\epsilon\Delta u_h^{n+1}
-\boldsymbol\beta\cdot\nabla u_h^{n+1}
-\gamma u_h^{n+1}
$$
and the physical-flux mismatch
$$
J_{K,F}^{n+1}
=
\bigl(
\boldsymbol\sigma_h^{n+1}
-\epsilon\nabla u_h^{n+1}
\bigr)\cdot n_K
\qquad
\text{on }F\subset\partial K.
$$
Motivated by the localized upper bound in Remark~\ref{rem:rho-broken-localization}, we use the computable local surrogate
$$
\rho_K^2
:=
\tau\sum_{n=0}^{N_T-1}
\left[
h_K^2\|R_K^{n+1}\|_{0,K}^2
+
\sum_{F\subset\partial K}
h_F\|J_{K,F}^{n+1}\|_{0,F}^2
\right].
$$
We do not estimate the mesh-independent constants implicit in the theoretical localization bound; instead, we use the canonical element and facet scalings to construct the marking distribution. For interior facets, we assign the two one-sided mismatch contributions to their corresponding neighboring elements. We then define $\xi_K^2:=\eta_K^2+\rho_K^2$.

We compare three refinement strategies: uniform refinement, D\"orfler marking~\cite{doerfler1996convergent} based on $\eta_K^2$, and D\"orfler marking based on $\xi_K^2$. For the two residual-based strategies, the marked set $\mathcal M_h$ is chosen so that
$$
\sum_{K\in\mathcal M_h} I_K^2
\ge
\theta
\sum_{K\in\mathcal T_h} I_K^2,
\qquad
\theta=0.5,
$$
where $I_K^2=\eta_K^2$ or $I_K^2=\xi_K^2$, respectively. All strategies start from a $4\times4$ subdivision of the unit square, corresponding to $32$ triangles. We perform six successive refinements for each residual-based strategy. We stop uniform refinement at $2048$ cells, since the next uniform level would exceed the prescribed limit of $5000$ cells.

For the present coefficients, since
$\nabla\cdot\boldsymbol\beta=0$,
$$
a_{\rm dar}(v,v)
=
\epsilon\|\nabla v\|_0^2
+
\gamma\|v\|_0^2.
$$
Accordingly, retaining the coercive bilinear form contribution in the energy argument of Theorem~\ref{thm:broken-stability}, rather than replacing it only by its lower bound $\alpha\|v\|_V^2$, motivates the following discrete-in-time error quantity:
$$
\begin{aligned}
E_{\rm dar}^2
:=\;&
\|e^{N_T}\|_0^2
+
\sum_{n=0}^{N_T-1}
\|e^{n+1}-e^n\|_0^2
+
\tau\sum_{n=0}^{N_T-1}
\left(
\epsilon\|\nabla e^{n+1}\|_0^2
+
\gamma\|e^{n+1}\|_0^2
\right).
\end{aligned}
$$
We also construct a local reference error distribution solely for diagnostic purposes. Since $\eta_K^2$ contains the initial projection contribution, for this localization diagnostic we define
$$
E_{\rm ref}^2
:=
\|u_0-u_h^0\|_0^2+E_{\rm dar}^2,
$$
and let $\mathcal E_K^2$ denote the contribution of element $K$ to $E_{\rm ref}^2$, including the initial-projection, time-increment, energy, and final-time terms. The adaptive algorithm never uses this quantity. Global effectivity alone does not measure whether an estimator localizes the error in the correct elements; related localization diagnostics have therefore been used in adaptive finite-element studies, for example the fraction of incorrect refinement decisions in~\cite{izsak2008implicit}. To assess how well a marking indicator reproduces the spatial distribution of the reference error, we use the scale-invariant cosine alignment
$$
\mathcal A(I,\mathcal E)
:=
\frac{
\sum_{K\in\mathcal T_h} I_K^2\mathcal E_K^2
}{
\left(\sum_{K\in\mathcal T_h} I_K^4\right)^{1/2}
\left(\sum_{K\in\mathcal T_h} \mathcal E_K^4\right)^{1/2}
}.
$$
Thus $\mathcal A(I,\mathcal E)$ is the cosine similarity between the nonnegative vectors $(I_K^2)_K$ and $(\mathcal E_K^2)_K$. In particular, $0\le\mathcal A(I,\mathcal E)\le1$, with values close to one indicating similar spatial distributions. This diagnostic compares only the spatial distributions and is insensitive to their overall magnitudes.

\begin{figure}[!t]
  \centering
  \begin{subfigure}[t]{0.48\textwidth}
    \centering
    \includegraphics[width=\linewidth]
    {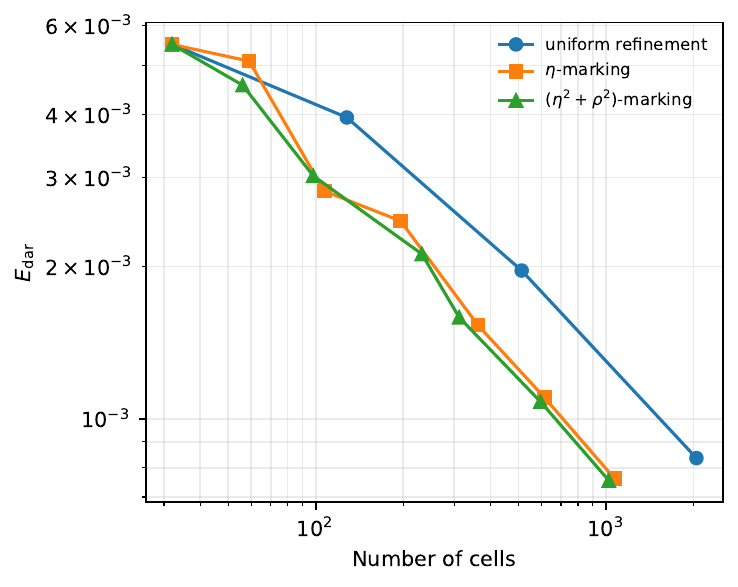}
    \caption{Error versus spatial complexity.}
  \end{subfigure}
  \hfill
  \begin{subfigure}[t]{0.48\textwidth}
    \centering
    \includegraphics[width=\linewidth]
    {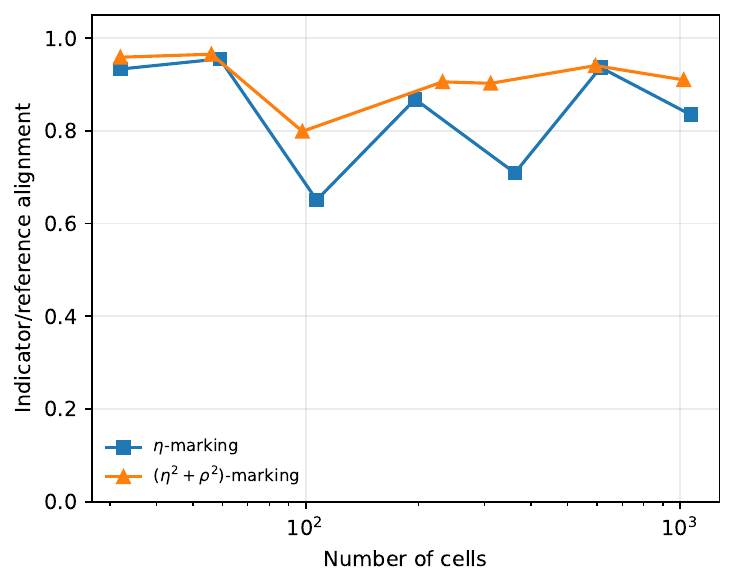}
    \caption{Indicator/reference alignment.}
  \end{subfigure}
  \caption{Adaptive diffusion--advection--reaction experiment.}
  \label{fig:num-dar-performance}
\end{figure}

Figure~\ref{fig:num-dar-performance} summarizes the quantitative results. Both residual-driven strategies improve the error-versus-complexity behavior relative to uniform refinement. On the finest meshes, $\eta$-marking gives $E_{\rm dar}=7.62\times10^{-4}$ with $1068$ cells, while augmented marking gives $E_{\rm dar}=7.55\times10^{-4}$ with $1022$ cells. For comparison, uniform refinement gives $E_{\rm dar}=8.36\times10^{-4}$ with $2048$ cells. Thus, both adaptive procedures achieve slightly lower errors using about half as many cells as the finest uniform mesh.

The difference between the two adaptive indicators is clearer in their spatial localization. Along the computed adaptive sequences, the augmented indicator remains consistently more closely aligned with the manufactured local error than the $\eta$-based indicator. For example, on the final meshes, $\mathcal A(\eta,\mathcal E)\approx0.835$ and $\mathcal A(\xi,\mathcal E)\approx0.910$. The improvement is more pronounced on some intermediate meshes, where the distribution based on $\eta_K^2$ temporarily loses alignment with the reference local error, whereas the augmented indicator remains comparatively stable.

The final adaptive meshes are shown in Figure~\ref{fig:num-dar-meshes}. Both strategies concentrate refinement along the trajectory of the moving localized feature. The resulting meshes have comparable overall complexity.

\begin{figure}[!t]
  \centering
  \begin{subfigure}[t]{0.42\textwidth}
    \centering
    \includegraphics[width=\linewidth]
    {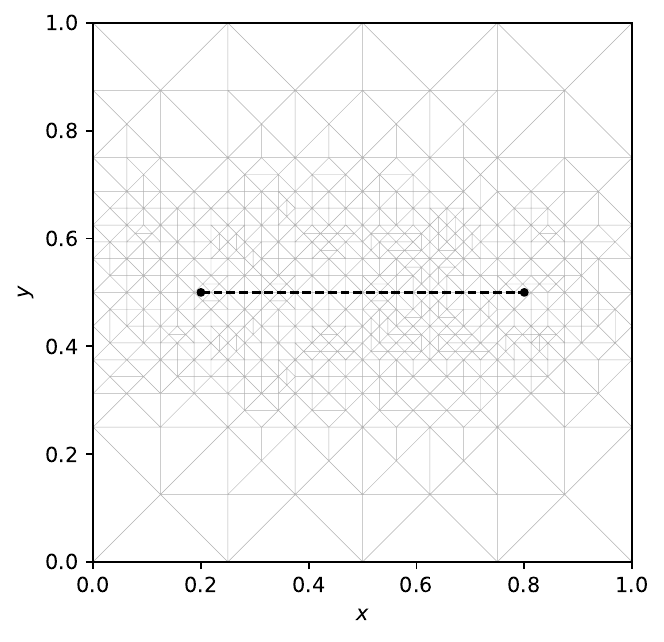}
    \caption{$\eta$-marking.}
  \end{subfigure}
  \hfill
  \begin{subfigure}[t]{0.42\textwidth}
    \centering
    \includegraphics[width=\linewidth]
    {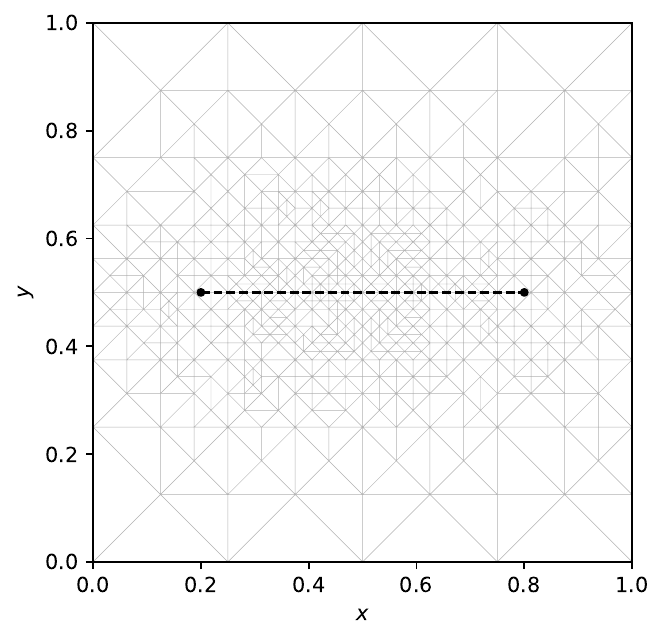}
    \caption{$(\eta^2+\rho^2)$-marking.}
  \end{subfigure}
  \caption{Final adaptive meshes. The dashed line indicates the
  trajectory of the moving feature.}
  \label{fig:num-dar-meshes}
\end{figure}

\FloatBarrier
\section{Conclusions}

We proposed a time-stepping minimum-residual framework for coercive transient variational problems in which the residual at each Backward Euler step is represented with the steady spatial test-space inner product, scaled by the time step. For arbitrary trial sequences, this yields stability estimates in a discrete parabolic energy quantity and one-sided efficiency estimates. After discretization of the test space, the computable residual representative is supplemented by a complementary residual contribution, yielding fully discrete reliability estimates for both conforming and broken-test formulations. The same framework applies to neural-network trial classes, for which the residual decompositions considered here are computable from spatial quadratures at the discrete time levels.

For the smooth heat problem, the broken-test MinRes discretization exhibits the expected convergence rates and the accumulated discrete residual tracks the discrete-in-time error. For the neural approximation, test-space refinement resolves an increasing fraction of a fixed residual, while the relative complementary contribution decreases. In the transient diffusion--advection--reaction example, residual-based marking improves error versus spatial complexity compared with uniform refinement, and the augmented indicator provides more consistent localization of the reference error along the computed adaptive sequences.

The efficiency estimates obtained here are not uniform reverse bounds for the stability quantity as $\tau\to0$, and in the broken setting they involve the full trial-space error, including the skeleton component. The adaptive computations also use one spatial mesh over the whole time interval at each refinement cycle. Extensions of interest include time-dependent spatial meshes, parameter-robust formulations for convection-dominated problems, nonlinear problems in Banach spaces using duality maps~\cite{muga2026residual}, and nonconforming or hybrid spatial discretizations.

\bibliographystyle{abbrv}
\bibliography{references}
\end{document}